\documentclass[a4paper,16pt]{article}
\usepackage[utf8]{inputenc}
\usepackage{amssymb,amsmath,mathrsfs}
\usepackage{amsthm}
\usepackage[colorlinks,
            linkcolor=blue,
            anchorcolor=blue,
            citecolor=green
            ]{hyperref}
\numberwithin{equation}{section}
\usepackage{graphicx}
\usepackage{amsmath}
\newtheorem{theorem}{Theorem}[section]

\theoremstyle{plain}
\newtheorem{thm}[theorem]{Theorem }
\newtheorem{defi}[theorem]{Definition }
\newtheorem{prop}[theorem]{Proposition }
\newtheorem{cor}[theorem]{Corollary }

\newtheorem{lem}[theorem]{Lemma }

\newtheorem{rem}[theorem] {Remark}
\usepackage{dsfont}

\setlength{\textheight}{200mm} \setlength{\textwidth}{160mm}
\setlength{\oddsidemargin}{5mm} \setlength{\evensidemargin}{5mm}

\begin{document}

\title{Traveling waves of the quintic focusing NLS--Szeg\H{o} equation}
\author{Ruoci Sun\footnote{Laboratoire de Math\'ematiques d’Orsay, Univ. Paris-Sud \uppercase\expandafter{\romannumeral11}, CNRS, Universit\'e Paris-Saclay, F-91405 Orsay, France (ruoci.sun@math.u-psud.fr).}}

\maketitle

\noindent $\mathbf{Abstract}$ \quad We study the influence of Szeg\H{o} projector $\Pi$ on the $L^2-$critical one-dimensional non linear focusing Schr\"odinger equation, leading to the quintic focusing NLS--Szeg\H{o} equation
\begin{equation*}
i\partial_t u + \partial_x^2 u + \Pi(|u|^4 u)=0, \quad (t,x)\in \mathbb{R}\times \mathbb{R}, \qquad u(0, \cdot)=u_0.
\end{equation*}This equation is globally well-posed in $H^1_+=\Pi(H^1(\mathbb{R}))$, for every initial datum $u_0$. The solution $L^2$-scatters both forward and backward in time if $u_0$ has sufficiently  small mass. We prove the orbital stability with scaling of the traveling wave : $u_{\omega,c}(t,x)=e^{i\omega t}Q(x+ct)$, for some $\omega, c>0$, where $Q$ is a ground state associated to Gagliardo--Nirenberg type functional 
\begin{equation*}
I^{(\gamma)}(f) = \frac{\|\partial_x f\|_{L^2}^2\|f\|_{L^2}^{4}+\gamma \langle-i\partial_x f ,f\rangle_{L^2}^2 \|f\|_{L^2}^2}{\|f\|_{L^{6}}^{6}}, \qquad \forall  f\in H^1_+ \backslash \{0\},
\end{equation*}for some $\gamma\geq 0$. The ground states are completely classified in the case $\gamma=2$, leading to the actual orbital stability without scaling for appropriate traveling waves. As a consequence, the scattering mass threshold of the focusing quintic NLS--Szeg\H{o} equation is strictly below the mass of ground state associated to the functional $I^{(0)}$, unlike the recent result by Dodson $[\ref{Dodson, Global well-posedness and scattering }]$ on the usual quintic focusing non linear Schr\"odinger equation.

\bigskip

\noindent $\mathbf{Keywords}$ \quad $L^2-$critical focusing Schr\"odinger equation, Szeg\H{o} projector, orbital stability, scattering threshold
\tableofcontents

\section{Introduction}

\noindent We consider the following $L^2-$critical 1-dimensional NLS--Szeg\H{o} equation on the line $\mathbb{R}$
\begin{equation}\label{NLS-Szego quintic R}
i\partial_t u + \partial_x^2 u =- \Pi(|u|^4 u) , \quad (t,x)\in \mathbb{R}\times \mathbb{R}, \qquad u(0, \cdot)=u_0,
\end{equation}where $\Pi : L^2(\mathbb{R}) \to L^2(\mathbb{R})$ denotes the Szeg\H{o} projector that cancels all negative Fourier modes
\begin{equation}\label{definition of szego projector on L2 R}
\Pi(f)(x)=  \frac{1}{2\pi} \int_{0}^{+\infty} e^{ix\xi} \hat{f}(\xi)\mathrm{d}\xi, \qquad \forall f \in L^1(\mathbb{R}) \bigcap L^2(\mathbb{R}).
\end{equation}Set $L^2_+=\Pi(L^2(\mathbb{R}))$, then $L^2_+$ can be identified as the Hardy space that consists of all the holomorphic functions on the Poincar\'e half-plane $\mathbb{H}_+:=\{z\in \mathbb{C}: \mathrm{Im}z >0\}$ with $L^2$ boundary
\begin{equation*}
L^2_+=\{f \quad  \mathrm{holomorphic} \quad \mathrm{on} \quad \mathbb{H}_+ : \|f\|_{L^2_+}^2:=\sup_{y >0} \int_{\mathbb{R}} |f(x+iy)|^2 \mathrm{d}x <+ \infty\}.
\end{equation*}Since $\Pi=  \frac{\mathrm{id}+i \mathcal{H}}{2}$, where $\mathcal{H}=-i\mathrm{sign}(-i\partial_x)$ is the Hilbert transform, $\Pi: L^p(\mathbb{R})\to L^p (\mathbb{R})$ is a bounded operator, for every $1<p<+\infty$ (Stein $[\ref{Stein Singular Integrals and Differentiability Properties of Functions}]$). We define the filtered Sobolev spaces $H^s_+=H^s(\mathbb{R}) \bigcap L^2_+$, for every $s\geq 0$. \\

\noindent The motivation to study this equation is based on the following two results. On the one hand, the $L^2-$critical focusing non linear Schr\"odinger equation
\begin{equation}\label{L2 critical focusing NLS equation}
i\partial_t U + \partial_x^2 U = - |U|^4 U, \qquad (t,x) \in \mathbb{R}\times \mathbb{R},\qquad  U(0, \cdot)=U_0.
\end{equation}marks the transition between the global existence (see Cazenave--Weissler $[\ref{Cazenave-weissler1}, \ref{Cazenave-weissler2}]$ for small data case) and the blow-up phenomenon (see Glassey $[\ref{Glassey, R On the blow up of }]$ for viriel identity method, Perelman $[\ref{Perelman On the blow-up phenomenon for the critical nls}]$ and Merle--Raphael $[\ref{Merle--Raphael On universality of blow-up profile L2 critical schrodinger equation}]$ for blow-up dynamics). The instability of traveling waves $U(t,x)=e^{i\omega t}R(x)$ of equation $(\ref{L2 critical focusing NLS equation})$ and the classification of its ground states $R$ associated to the Gagliardo--Nirenberg inequality 
\begin{equation*}
\|f\|_{L^6}^6 \lesssim\|\partial_x f\|_{L^2}^2 \|f\|_{L^2}^4, \qquad \forall f \in H^1(\mathbb{R})
\end{equation*}are established in Weinstein $[\ref{Weinstein nls sharp interpolation estimates}]$. The ground states are unique up to scaling, phase rotation and spatial translation. It has been proved that the scattering mass threshold of equation $(\ref{L2 critical focusing NLS equation})$ is equal to the mass of ground state $\|R\|_{L^2}$ in Dodson $[\ref{Dodson, Global well-posedness and scattering }]$.\\

\noindent On the other hand, it has been shown in G\'erard--Grellier $[\ref{gerardgrellier1}, \ref{Gerard grellier book cubic szego equation and hankel operators}]$ that filtering the positive Fourier modes could accelerate the transition to high frequencies in a Hamiltonian evolution PDE, leading to the super-polynomial growth of Sobolev norms of solutions of the cubic Szeg\H{o} equation on the torus $\mathbb{S}^1$. So we introduce the cubic defocusing NLS--Szeg\H{o} equation on the torus $\mathbb{S}^1$ in Sun $[\ref{Sun Long time behavior of NLS Szego equation}]$ in order to understand how applying a filter keeping only positive Fourier modes modifies the long time dynamics of the non linear Schr\"odinger equation. \\

\noindent We continue this topic in this paper and we put the NLS--Szeg\H{o} equation on the line $\mathbb{R}$. The traveling waves and the classification of ground states of the cubic Szeg\H{o} equation on the line $\mathbb{R}$
\begin{equation}\label{cubic szego equation on R}
i\partial_t V =\Pi(|V|^2 V), \qquad (t,x) \in \mathbb{R}\times\mathbb{R},\qquad  V(0, \cdot)=V_0 
\end{equation}are studied in Pocovnicu $[\ref{pocovnicu Traveling waves for the cubic Szego eq}]$. Now we consider the quintic focusing NLS--Szeg\H{o} equation on the line $\mathbb{R}$ in order to understand how $\Pi$ modifies the global wellposedness result, the scattering mass threshold and the stability result of traveling waves of the $L^2-$critical non linear Schr\"odinger equation. \\

\begin{defi}
Fix $s\geq 0$, a global solution $u\in C(\mathbb{R}; H^s_+)$ of equation $(\ref{NLS-Szego quintic R})$ is said to $H^s-$scatter forward in time if there exists $u_+ \in H^s_+$ such that 
\begin{equation*}
\lim_{t\to +\infty} \|e^{it\partial_x^2} u_+ -u(t)\|_{H^s} =0.
\end{equation*}A global solution $u\in C(\mathbb{R}; H^s_+)$ of equation $(\ref{NLS-Szego quintic R})$ is said to $H^s-$scatter backward in time if there exists $u_- \in H^s_+$ such that 
\begin{equation*}
\lim_{t\to -\infty} \|e^{it\partial_x^2} u_- -u(t)\|_{H^s} =0.
\end{equation*}
\end{defi}

\noindent In the small mass case, equation $(\ref{NLS-Szego quintic R})$ is globally well-posed in $L^2_+$ and the solution $L^2-$scatters both forward and backward in time. The proof is similar to Cazenave--Weissler $[\ref{Cazenave-weissler1}, \ref{Cazenave-weissler2}]$.
\begin{prop}\label{small mass scatter theorem}
There exists $\epsilon_0 >0$ such that if $\|u_0\|_{L^2}\leq \epsilon_0$, then the global solution $u\in C(\mathbb{R}; L^2_+)$ of equation $(\ref{NLS-Szego quintic R})$ exists uniquely and $L^2-$scatters both forward and backward in time.
\end{prop}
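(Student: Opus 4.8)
The plan is to establish global well-posedness and scattering for small mass data via a standard Strichartz-based fixed point argument, following the Cazenave–Weissler scheme for the $L^2$-critical NLS. The key observation that makes this work is that the Szegő projector $\Pi$ is bounded on every $L^p(\mathbb{R})$ for $1<p<+\infty$ (as recalled in the excerpt via the Hilbert transform decomposition $\Pi=(\mathrm{id}+i\mathcal{H})/2$), so the nonlinear term $\Pi(|u|^4u)$ enjoys exactly the same $L^p$ mapping properties as the bare nonlinearity $|u|^4u$. Moreover $\Pi$ commutes with the free propagator $e^{it\partial_x^2}$ (both being Fourier multipliers), which guarantees that the flow preserves the space $L^2_+$, so the iteration can be carried out entirely within the filtered space.

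\medskip

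First I would recall the $L^2$-critical Strichartz estimates: writing the admissible pair $(q,r)=(6,6)$ in dimension one (for which $\frac{2}{q}+\frac{1}{r}=\frac12$), one has the homogeneous estimate $\|e^{it\partial_x^2}f\|_{L^6_{t,x}}\lesssim \|f\|_{L^2}$ together with the dual (retarded) inhomogeneous estimate. I would then set up the Duhamel integral operator
\begin{equation*}
\Phi(u)(t)=e^{it\partial_x^2}u_0 + i\int_0^t e^{i(t-s)\partial_x^2}\,\Pi\bigl(|u(s)|^4 u(s)\bigr)\,\mathrm{d}s,
\end{equation*}
and seek a fixed point in the ball $\{u : \|u\|_{L^6_{t,x}(\mathbb{R}\times\mathbb{R})}\leq 2\delta\}$ of the global-in-time Strichartz space, for a suitable small $\delta$. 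Applying Strichartz together with the $L^p$-boundedness of $\Pi$ gives $\|\Phi(u)\|_{L^6_{t,x}}\lesssim \delta + \|u\|_{L^6_{t,x}}^5$, and Hölder with the boundedness of $\Pi$ on $L^{6/5}$ yields the matching contraction estimate on differences; choosing $\epsilon_0$ small enough that $\|u_0\|_{L^2}\leq\epsilon_0$ forces $\delta$ into the regime where both the self-mapping and contraction constants are favorable closes the argument and produces a unique global solution in $C(\mathbb{R};L^2_+)\cap L^6_{t,x}$.

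\medskip

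Scattering then follows in the usual way: the smallness of the global Strichartz norm makes the Duhamel integral $\int_0^{+\infty}e^{-is\partial_x^2}\Pi(|u|^4u)\,\mathrm{d}s$ converge in $L^2$, so that $u_+:=u_0+i\int_0^{+\infty}e^{-is\partial_x^2}\Pi(|u|^4u)\,\mathrm{d}s$ defines the forward scattering datum, and one verifies $\|e^{it\partial_x^2}u_+-u(t)\|_{L^2}\to 0$ as $t\to+\infty$ by dominated convergence on the tail; backward scattering is identical with the sign of time reversed. The crucial point for staying inside $L^2_+$ is that $u_\pm\in L^2_+$ because $\Pi$ commutes with $e^{-is\partial_x^2}$ and fixes $u_0$, so $u_\pm=\Pi u_\pm$.

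\medskip

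I do not expect a serious obstacle here, since the $L^2$-criticality means the scaling is exactly the one for which the global Strichartz norm controls the iteration, and the presence of $\Pi$ costs nothing thanks to its $L^p$-boundedness and its commutation with the linear flow. The only point requiring a little care is bookkeeping the two roles of $\Pi$—mapping $L^{6/5}\to L^{6/5}$ inside the inhomogeneous estimate and commuting with the propagator to keep iterates holomorphic—but neither is delicate. This is precisely why the excerpt asserts that the proof is \emph{similar to} Cazenave–Weissler, and I would present it as a short adaptation rather than a from-scratch argument.
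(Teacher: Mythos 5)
Your proposal is correct and is exactly the argument the paper intends: the paper gives no details for this proposition, saying only that the proof is similar to Cazenave--Weissler, and your adaptation — the $(6,6)$ Strichartz pair in dimension one, a fixed point in the global $L^6_{t,x}$ ball using the $L^{6/5}$-boundedness of $\Pi$, commutation of $\Pi$ with $e^{it\partial_x^2}$ to stay in $L^2_+$, and scattering from the vanishing tail of the global Strichartz norm — is precisely that standard scheme. No gaps; this matches the framework the paper itself uses later in Proposition \ref{persistence of regularity of scattering}.
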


\noindent There are three conservation laws for $(\ref{NLS-Szego quintic R})$ and $(\ref{L2 critical focusing NLS equation})$: the mass, the momentum and the Hamiltonian
\begin{equation*}
M(u)=\|u\|_{L^2}^2, \quad P(u)=\langle D u, u \rangle_{L^2},\qquad E(u)=\frac{\|\partial_x u\|_{L^2}^2}{2}  - \frac{\|u\|_{L^6}^6}{6},
\end{equation*}where $D=-i\partial_x$ and $u \in H^1_+$ for $(\ref{NLS-Szego quintic R})$, $u \in H^1(\mathbb{R})$ for $(\ref{L2 critical focusing NLS equation})$. If $u \in C(\mathbb{R}; H^1_+)$ solves equation $(\ref{NLS-Szego quintic R})$, then the momentum $P(u) =\||D|^{\frac{1}{2}}u\|_{L^2}^2$ and the mass $M(u)$ control $H^{\frac{1}{2}}-$norm of the solution, leading to the global wellposedness of equation $(\ref{NLS-Szego quintic R})$. By using Gagliardo--Nirenberg's interpolation inequality 
\begin{equation}\label{Gagliardo-Nirenberg interpolation theta=m/(2m+2)}
\|u\|_{L^{2m+2}} \lesssim_m \||D|^{\frac{1}{2}} u\|_{L^2}^{\frac{m}{m+1}}\|u\|_{L^2}^{\frac{1}{m+1}},  \qquad \forall m \geq 0, \qquad \forall u \in H^{\frac{1}{2}}(\mathbb{R}),
\end{equation}one can solve the  problem of global wellposedness for all $L^2-$supercritical non linear NLS-Szeg\H{o} equations for all large initial data $u_0\in H^1_+$. On the other hand, when $U_0 \in H^1(\mathbb{R})$ such that $E(U_0)<0$ and $ U_0 \in L^2(\mathbb{R}, x^2\mathrm{d}x)$, then the solution $U$ of equation $(\ref{L2 critical focusing NLS equation})$ associated to initial datum $U_0$ blows up in finite time by the viriel identity (Glassey $[\ref{Glassey, R On the blow up of }]$, Cazenave $[\ref{Cazenave introduction to NLS Rio de Janeiro}, \ref{Cazenave book NLS AMS}]$). We refer to Perelman $[\ref{Perelman On the blow-up phenomenon for the critical nls}]$ and Merle--Rapha\"el $[\ref{Merle--Raphael On universality of blow-up profile L2 critical schrodinger equation}]$ to see the asymptotic representation of the blow-up dynamics of equation $(\ref{L2 critical focusing NLS equation})$ in details. Now we state the first result of this paper.\\

\begin{thm}\label{global wellposedness for L2 super critical nls equation}
For all $m \geq 0$, $\lambda=\pm 1$ and $u_0\in H^{1}_+$, there exists a unique function $u\in C(\mathbb{R}; H^1_+)$ solving the following equation
\begin{equation}\label{L2 super critical nls szego}
i\partial_t u + \partial_x^2 u = \lambda \Pi(|u|^{2m} u), \qquad u(0,x)=u_0(x), \qquad (t,x) \in \mathbb{R}\times \mathbb{R}.
\end{equation}
\end{thm}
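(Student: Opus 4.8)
The plan is to establish local wellposedness in $H^1_+$ and then promote it to a global solution by showing that the $H^1$-norm of any $H^1_+$ solution stays bounded on its maximal interval of existence, with a bound depending only on the conserved quantities $M(u_0)$, $P(u_0)$, $E(u_0)$. For the local theory I would run a Cazenave--Weissler type contraction on the Duhamel formulation
\begin{equation*}
u(t) = e^{it\partial_x^2} u_0 - i\lambda \int_0^t e^{i(t-s)\partial_x^2} \Pi\bigl(|u(s)|^{2m} u(s)\bigr)\,\mathrm{d}s .
\end{equation*}
Two structural facts make this immediate in $H^1_+$: first, $\Pi$ is a Fourier multiplier, hence commutes with $e^{it\partial_x^2}$ and with $\partial_x$ and is a bounded projection onto $H^1_+$, so the iterates remain in $H^1_+$; second, in one space dimension $H^1(\mathbb{R})$ is a Banach algebra continuously embedded in $L^\infty$, so $v \mapsto |v|^{2m}v$ is locally Lipschitz from $H^1$ to $H^1$ with $\||v|^{2m}v\|_{H^1} \lesssim_m \|v\|_{H^1}^{2m+1}$. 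A contraction on $C([0,T];H^1_+)$ for $T$ small (depending on $\|u_0\|_{H^1}$) then yields a unique maximal solution together with the blow-up alternative: if the maximal forward time $T^\ast$ is finite, then $\|u(t)\|_{H^1} \to \infty$ as $t \uparrow T^\ast$.

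The heart of the matter, and what distinguishes this equation from the usual focusing NLS, is that on $L^2_+$ the momentum is coercive. Indeed, for $u \in H^1_+$ the Fourier transform $\hat u$ is supported in $[0,+\infty)$, so
\begin{equation*}
P(u) = \langle D u, u\rangle_{L^2} = \int_0^{+\infty} \xi\,|\hat u(\xi)|^2\,\mathrm{d}\xi = \||D|^{\frac12} u\|_{L^2}^2 \geq 0,
\end{equation*}
whence $\|u\|_{H^{1/2}}^2 \simeq M(u) + P(u)$. Since both $M$ and $P$ are conserved, every solution stays bounded in $H^{1/2}$ in terms of its initial data alone. I would then feed this into the Gagliardo--Nirenberg inequality $(\ref{Gagliardo-Nirenberg interpolation theta=m/(2m+2)})$ which, raised to the power $2m+2$, gives
\begin{equation*}
\|u(t)\|_{L^{2m+2}}^{2m+2} \lesssim_m \||D|^{\frac12} u(t)\|_{L^2}^{2m}\,\|u(t)\|_{L^2}^2 = P(u_0)^m M(u_0),
\end{equation*}
so the potential-energy term is controlled by conserved quantities \emph{at the $\dot H^{1/2}$ level rather than the $\dot H^1$ level}. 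This is precisely why the focusing nonlinearity cannot drive a blow-up: there is no $\dot H^1$ contribution left to compete with the kinetic term.

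To close the $H^1$ bound I would invoke conservation of the Hamiltonian $E(u) = \tfrac12\|\partial_x u\|_{L^2}^2 + \tfrac{\lambda}{2m+2}\|u\|_{L^{2m+2}}^{2m+2}$, rewritten as
\begin{equation*}
\|\partial_x u(t)\|_{L^2}^2 = 2E(u_0) - \frac{\lambda}{m+1}\,\|u(t)\|_{L^{2m+2}}^{2m+2}.
\end{equation*}
When $\lambda = +1$ the last term is favourable and $\|\partial_x u(t)\|_{L^2}^2 \leq 2E(u_0)$; when $\lambda = -1$ that term equals $\tfrac{1}{m+1}\|u(t)\|_{L^{2m+2}}^{2m+2}$, which is bounded by the conserved quantity $C_m\,P(u_0)^m M(u_0)$ from the previous step. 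In either case $\sup_t \|\partial_x u(t)\|_{L^2}$ is finite, and together with mass conservation this produces a uniform-in-time $H^1$ bound, contradicting the blow-up alternative unless $T^\ast = +\infty$; the same argument backward in time gives global existence on all of $\mathbb{R}$, with uniqueness inherited from the local theory.

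I expect the only genuinely delicate point to be the rigorous justification of the three conservation laws at $H^1$ regularity, since the formal computations require more smoothness than the solution is a priori known to possess. I would handle this in the standard way: approximate $u_0$ in $H^1_+$ by smoother data in, say, $H^2_+$, establish the identities for the smooth solutions where the manipulations are licit, and pass to the limit using the continuous dependence furnished by the local theory. The conceptual obstacle --- recognizing that the Szeg\H{o} constraint converts the momentum into a conserved, coercive $\dot H^{1/2}$-quantity, so that $(\ref{Gagliardo-Nirenberg interpolation theta=m/(2m+2)})$ bounds the nonlinearity subcritically --- is already resolved by the observations above, and it is exactly what renders even the supercritical cases $m > 2$ globally wellposed here, in sharp contrast to equation $(\ref{L2 critical focusing NLS equation})$.
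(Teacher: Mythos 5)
Your proposal is correct and takes essentially the same route as the paper: local wellposedness by a contraction argument, then the key structural observation that on $H^1_+$ the conserved momentum $P(u)=\||D|^{\frac{1}{2}}u\|_{L^2}^2$ is coercive, so that the Gagliardo--Nirenberg inequality $(\ref{Gagliardo-Nirenberg interpolation theta=m/(2m+2)})$ together with conservation of $M$, $P$ and $E$ yields the paper's uniform bound $\sup_{t}\|\partial_x u(t)\|_{L^2}^2 \lesssim_m \|\partial_x u_0\|_{L^2}^2 + \||D|^{\frac{1}{2}}u_0\|_{L^2}^{2m}\|u_0\|_{L^2}^2$ in the focusing case. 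Your write-up merely makes explicit some details the paper leaves implicit (the blow-up alternative and the regularization argument justifying the conservation laws at $H^1$ regularity).
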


\noindent The local well-posedness of equation $(\ref{L2 super critical nls szego})$ is established by the fixed-point theorem and Sobolev estimates. In the focusing case $\lambda=-1$, since the mass, the momentum and the Hamiltonian are conserved under the flow of equation $(\ref{L2 super critical nls szego})$, inequality $(\ref{Gagliardo-Nirenberg interpolation theta=m/(2m+2)})$ yields that 
\begin{equation*}
\sup_{t\in \mathbb{R}}\|\partial_x u(t)\|_{L^2}^2 \lesssim_m \|\partial_x u_0\|_{L^2}^2 + \||D|^{\frac{1}{2}}  u_0\|_{L^2}^{2m} \|u_0\|_{L^2}^2.
\end{equation*}Thus every solution of $(\ref{L2 super critical nls szego})$ is global. Besides the global well-posedness problem, there are still other differences between equation $(\ref{NLS-Szego quintic R})$ and equation $(\ref{L2 critical focusing NLS equation})$.\\

\noindent We consider the traveling waves of equation $(\ref{NLS-Szego quintic R})$, $u(t,x)=e^{i\omega t}Q(x+ct)$, for some $\omega, c \in \mathbb{R}$. $u$ solves $(\ref{NLS-Szego quintic R})$ if and only if $Q$ solves the following non local elliptic equation
\begin{equation}\label{equation for bounded state Q}
\partial_x^2 Q + \Pi(|Q|^4 Q) = \omega Q + c D Q. 
\end{equation}It suffices to identity equation $(\ref{equation for bounded state Q})$ to the Euler--Lagrange equation of some functional associated to Gagliardo--Nirenberg inequality $(\ref{Gagliardo-Nirenberg interpolation theta=m/(2m+2)})$ in order to obtain the traveling waves. For all $m\geq 2$ and $\gamma\geq 0$, we define 

\begin{equation}\label{general functional gamma m}
I_m^{(\gamma)}(f) := \frac{\|\partial_x f\|_{L^2}^m\|f\|_{L^2}^{m+2}+\gamma \||D|^{\frac{1}{2}}f\|_{L^2}^{2m}\|f\|_{L^2}^2}{\|f\|_{L^{2m+2}}^{2m+2}}, \qquad \forall  f\in H^1(\mathbb{R}) \backslash \{0\}.
\end{equation}This functional is invariant by space-translation, phase-translation, interior and exterior scaling. 
\begin{equation*}
I_m^{(\gamma)}(f) = I_m^{(\gamma)}(f_{\lambda, \mu, y, \theta}), \qquad \mathrm{where}\qquad f_{\lambda, \mu, y, \theta}(x)=\lambda e^{i\theta} f (\mu x+y), \qquad \forall x, y, \theta\in \mathbb{R}, \qquad \forall \lambda, \mu>0.
\end{equation*}We denote its greatest lower bound by $J_m^{(\gamma)}=\inf_{f\in H^1_+ \backslash \{0\}}I_m^{(\gamma)}(f)$ and all its minimizers by 
\begin{equation}\label{definition of set of ground states}
G_m^{(\gamma)}= \{f \in H^1_+ \backslash \{0\} \quad:\quad I_m^{(\gamma)}(f) =J_m^{(\gamma)}\} = \bigcup_{a,b>0}G_m^{(\gamma)}(a,b),
\end{equation}where $G_m^{(\gamma)}(a,b) = \{f \in G_m^{(\gamma)} : \|f\|_{L^2}=a, \|f\|_{L^6}=b\}$. Then we have
\begin{equation*}
G_m^{(\gamma)}(a,b) =\{\lambda f(\mu\cdot) \in H^1_+ \backslash \{0\}  \quad:\quad f \in G_m^{(\gamma)}(1,1), \quad \lambda=a^{-\frac{1}{m}}b^{\frac{m+1}{m}} \quad \mathrm{and} \quad \mu=b^{\frac{2m+2}{m}} a^{-\frac{2m+2}{m}}\}.
\end{equation*}A concentration-compactness argument shows that the functional $I_m^{(\gamma)}$ attains its minimum in $H^1_+ \backslash \{0\}$. We shall follow the idea of profile decomposition of minimizing sequence introduced in G\'erard $[\ref{gerard profile decomposition}]$, which is a refinement of the concentration-compactness principle (see Lions $[\ref{Lions concentration compactness 1}, \ref{Lions concentration compactness 2}]$ and Cazenave--Lions $[\ref{Cazenave-lions Orbital stability of standing waves }]$ for orbital stability of traveling waves of $L^2-$subcritical NLS equation), in order to establish the existence of minimizers. 

\begin{thm}\label{existence of minimizer of I m gamma general}
For all $m\geq 2$ and $\gamma\geq 0$, if $(f_n)_{n\in \mathbb{N}} \in H^1_+$ is a minimizing sequence for $I_m^{(\gamma)}$ such that $\|f_n\|_{L^2}=\|f_n\|_{L^{2m+2}}=1$ and $\lim_{n\to+\infty}I_m^{(\gamma)}(f_n)= J_m^{(\gamma)}$, then there exists a profile $U  \in G_m^{(\gamma)}(1,1)$, a strictly increasing function $\psi : \mathbb{N}\to \mathbb{N}$ and a real-valued sequence $(x_n)_{n\in \mathbb{N}}$ such that 
\begin{equation}\label{limit of difference between profile and original functions}
\lim_{n\to +\infty}\|f_{\psi(n)} - U(\cdot -x_n)\|_{H^1} = 0.
\end{equation}
\end{thm}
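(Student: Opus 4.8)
The plan is to reduce to a single-profile concentration by combining Gérard's profile decomposition $[\ref{gerard profile decomposition}]$ with a superadditivity argument, and then upgrade to strong $H^1$ convergence. First I would record boundedness: the normalization $\|f_n\|_{L^2}=\|f_n\|_{L^{2m+2}}=1$ turns $I_m^{(\gamma)}(f_n)$ into $\|\partial_x f_n\|_{L^2}^m+\gamma\||D|^{\frac12}f_n\|_{L^2}^{2m}$, which converges to the finite number $J_m^{(\gamma)}$; since $J_m^{(\gamma)}>0$ by the Gagliardo--Nirenberg inequality $(\ref{Gagliardo-Nirenberg interpolation theta=m/(2m+2)})$, the sequence $(\|\partial_x f_n\|_{L^2})_n$ is bounded and hence $(f_n)_n$ is bounded in $H^1_+$. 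Passing to a subsequence I may assume $\|\partial_x f_n\|_{L^2}^2\to P_\infty$ and $\||D|^{\frac12}f_n\|_{L^2}^2\to Q_\infty$, so that $P_\infty^{m/2}+\gamma Q_\infty^{m}=J_m^{(\gamma)}$.

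Next I would apply the profile decomposition to the bounded sequence $(f_n)_n$, obtaining (after extraction) profiles $U^{(j)}$, translations $x_n^{(j)}$ with $|x_n^{(j)}-x_n^{(k)}|\to\infty$ for $j\neq k$, and a remainder with $\lim_{J}\limsup_n\|r_n^{(J)}\|_{L^{2m+2}}=0$, where $f_n=\sum_{j\le J}U^{(j)}(\cdot-x_n^{(j)})+r_n^{(J)}$. Since $\Pi$ commutes with translations, each weak limit defining $U^{(j)}$ stays in $L^2_+$, so $U^{(j)}\in H^1_+$. Writing $A_j=\|U^{(j)}\|_{L^2}^2$, $B_j=\|\partial_x U^{(j)}\|_{L^2}^2$, $\Delta_j=\||D|^{\frac12}U^{(j)}\|_{L^2}^2$ and $C_j=\|U^{(j)}\|_{L^{2m+2}}^{2m+2}$, the orthogonality of the decomposition yields $\sum_j A_j\le 1$, $\sum_j B_j\le P_\infty$, $\sum_j\Delta_j\le Q_\infty$, and $\sum_j C_j=1$ (the last because the remainder vanishes in $L^{2m+2}$).

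Then comes the variational heart. Applying the definition of $J_m^{(\gamma)}$ to each profile gives
\[ B_j^{m/2}A_j^{(m+2)/2}+\gamma\Delta_j^m A_j\ \ge\ J_m^{(\gamma)}\,C_j. \]
Summing over $j$, using $A_j\le 1$ (hence $A_j^{(m+2)/2}\le 1$) together with the superadditivity $\sum_j t_j^{\,p}\le\big(\sum_j t_j\big)^{p}$ for $p=\tfrac m2,\,m\ge 1$, and the two bounds above, I obtain
\[ J_m^{(\gamma)}=J_m^{(\gamma)}\sum_j C_j\ \le\ \sum_j\big(B_j^{m/2}A_j^{(m+2)/2}+\gamma\Delta_j^m A_j\big)\ \le\ \Big(\sum_j B_j\Big)^{m/2}+\gamma\Big(\sum_j\Delta_j\Big)^{m}\ \le\ P_\infty^{m/2}+\gamma Q_\infty^m=J_m^{(\gamma)}. \]
Thus every inequality is an equality. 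In particular, for each $j$ with $U^{(j)}\neq 0$ (so $B_j>0$, as a nonzero $L^2_+$ function is nonconstant) equality in the first bound forces $A_j^{(m+2)/2}=1$, i.e. $A_j=1$; since $\sum_j A_j\le 1$ this leaves exactly one nonzero profile $U:=U^{(1)}$, with $A_1=C_1=1$, $B_1=P_\infty$, and $I_m^{(\gamma)}(U)=J_m^{(\gamma)}$, so $U\in G_m^{(\gamma)}(1,1)$.

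Finally, with a single profile the decomposition reads $f_n=U(\cdot-x_n)+r_n$ with $x_n=x_n^{(1)}$; the $L^2$ and $\dot H^1$ orthogonality identities together with $A_1=1$ and $B_1=P_\infty=\lim_n\|\partial_x f_n\|_{L^2}^2$ force $\|r_n\|_{L^2}\to 0$ and $\|\partial_x r_n\|_{L^2}\to 0$, which is exactly $(\ref{limit of difference between profile and original functions})$. The main obstacle I expect is not this final bookkeeping but securing the profile decomposition in the filtered space $H^1_+$ with all the required orthogonality relations simultaneously, namely the asymptotic $L^{2m+2}$ splitting with vanishing remainder and the persistence of the Hardy constraint $U^{(j)}\in L^2_+$; once these are in hand the superadditivity argument closes the proof, its only delicate point being that in the critical case $m=2$ the decisive gain comes from the amplitude factors $A_j\le 1$ rather than from strict superadditivity.
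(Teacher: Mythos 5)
Your proposal is correct and follows essentially the same route as the paper's proof: G\'erard/Hmidi--Keraani profile decomposition in $H^1_+$, the chain of inequalities combining superadditivity of $t\mapsto t^{m/2}$, $t\mapsto t^m$ with the amplitude bound $\|U^{(j)}\|_{L^2}\le 1$ to force equality everywhere and hence a single nonzero profile, and then the $\dot H^s$ near-orthogonality at $s=0,1$ to upgrade to strong $H^1$ convergence. The one step you assert rather than prove, $\sum_j\|U^{(j)}\|_{L^{2m+2}}^{2m+2}=1$, is exactly the ``claim'' the paper establishes by showing the cross terms $\int_{\mathbb{R}}|U^{(j)}(x-x_n^{(j)})||U^{(k)}(x-x_n^{(k)})|\,\mathrm{d}x$ vanish as the translations diverge; you correctly flag this as the point to secure, and your equality analysis (run over every index $j$, so the ordering convention of the profiles in the paper's Remark 2.2 is not needed) is otherwise the same argument.
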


\begin{rem}
If $H^1_+$ is replaced by $H^1(\mathbb{R})$, then we have $U \in H^1(\mathbb{R})\backslash \{0\} $ such that $\|U\|_{L^2}=\|U\|_{L^{2m+2}}=1$, $I_m^{(\gamma)}(U)=\min_{f \in H^1(\mathbb{R}) \backslash \{0\}}I_m^{(\gamma)}(f)$ and the limit $(\ref{limit of difference between profile and original functions})$ also holds.
\end{rem}

\noindent Thus $G_m^{(\gamma)}(a,b)$ is not empty, for all $a, b >0$. We refer to G\'erard--Lenzmann--Pocovnicu--Rapha\"el $[\ref{Gerard Lenzmann Pocovnicu Raphael A two-soliton with transient turbulent regime}]$ to see the asymptotic dynamics and long time behavior in two different regimes of the two-soliton solutions of the cubic focusing half-wave equation on $\mathbb{R}$. Similarly, one obtains the existence of ground states of traveling waves for $L^2-$critical Schr\"odinger equation on $\mathbb{R}^d$ (see Hmidi--Keraani $[\ref{Hmidi--Keraani application}]$), for cubic Szeg\H{o} equation (see Pocovnicu $[\ref{pocovnicu Traveling waves for the cubic Szego eq}]$), for non linear Schr\"odinger equation on the Heisenberg group (see Gassot $[\ref{Gassot radially symmetric traveling waves}]$), etc.\\

\noindent Given $m\geq 2$ and $\gamma\geq 0$, let $f \in H^1_+\backslash \{0\} $ be a minimizer of $I_m^{(\gamma)}$, then $\frac{\mathrm{d}}{\mathrm{d}\epsilon}\Big|_{\epsilon=0} \log I_m^{(\gamma)}(f+\epsilon h)=0$, for all $h\in H^1_+$. $f$ solves the following Euler--Lagrange equation
\begin{equation}\label{Euler Lagrange equation of I m gamma}
\begin{split}
&m\|f\|_{L^2}^{m+2}\|\partial_x f\|_{L^2}^{m-2}\partial_x^2 f +2(m+1)J_m^{(\gamma)}\Pi(|f|^{2m} f)\\
=& ((m+2)\|f\|_{L^2}^{m}\|\partial_x f\|_{L^2}^{m}+2\gamma \||D|^{\frac{1}{2}}f\|_{L^2}^{2m})f + 2\gamma m \|f\|_{L^2}^{2}\||D|^{\frac{1}{2}}f\|_{L^2}^{2m-2} Df.
\end{split}
\end{equation}

\noindent From now on, we restrict ourselves to the case $m=2$. We want to identify equation $(\ref{equation for bounded state Q})$ to equation 
\begin{equation*}
\frac{\|Q\|_{L^2}^{4}}{3J_2^{(\gamma)}}\partial_x^2 Q +\Pi(|Q|^{4} Q)
= \frac{2\|Q\|_{L^2}^2\|\partial_x Q\|_{L^2}^2+\gamma \||D|^{\frac{1}{2}}Q\|_{L^2}^{4}}{3J_2^{(\gamma)}} Q + \frac{2\gamma \|Q\|_{L^2}^{2}\||D|^{\frac{1}{2}}Q\|_{L^2}^{2} }{3J_2^{(\gamma)}} DQ.
\end{equation*}A minimizer $Q^{(\gamma)}\in G_2^{(\gamma)}$ is called the ground state of functional $I_2^{(\gamma)}$, if $\|Q^{(\gamma)}\|_{L^2}^4=  3 J_2^{(\gamma)}$. If $u(t,x)=e^{i\omega t} Q^{(\gamma)}(x+ct)$ solves equation $(\ref{NLS-Szego quintic R})$ and $Q^{(\gamma)} \in G_2^{(\gamma)}(\sqrt[4]{3 J_2^{(\gamma)}}, b)$ for some $b,\omega>0$ and $c, \gamma \geq 0$, then $c=0$ if and only if $\gamma=0$.\\

\noindent If $\gamma=0$, then we have $\|\partial_x Q^{(\gamma)}\|_{L^2}^2 = \frac{\omega}{2} \sqrt{3 J_2^{(0)}}$ and $\| Q^{(\gamma)}\|_{L^6}^6 =  \frac{3\omega}{2}\sqrt{3 J_2^{(0)}} $.

\noindent If $\gamma>0$, then we have
\begin{equation*}
\|\partial_x Q^{(\gamma)}\|_{L^2}^2=\frac{\sqrt{3J_2^{(\gamma)}}}{8\gamma}(4\gamma \omega -c^2),\qquad \||D|^{\frac{1}{2}} Q^{(\gamma)}\|_{L^2}^2 =\frac{\sqrt{3J_2^{(\gamma)}}c}{2\gamma},\qquad \| Q^{(\gamma)}\|_{L^6}^6 = 3 \sqrt{3J_2^{(\gamma)}} (\frac{\omega}{2}+ \frac{c^2}{8\gamma}).
\end{equation*}Furthermore, the interpolation inequality $\||D|^{\frac{1}{2}} Q^{(\gamma)}\|_{L^2}^2 \leq \| Q^{(\gamma)}\|_{L^2}\|\partial_x Q^{(\gamma)}\|_{L^2}$ yields that $c^2 \leq \frac{4 \gamma^2 \omega}{\gamma+2}$.\\

\noindent Even though we do not know how to classify all the ground states of $I_2^{(\gamma)}$, for general $\gamma\geq 0$, the $H^1-$orbital stability with scaling can be established by using theorem $\ref{existence of minimizer of I m gamma general}$ and the conservation law $P(u)=\langle -i \partial_x u, u\rangle_{L^2} = \||D|^{\frac{1}{2}} u\|_{L^2}^2$. 

\begin{thm}\label{Orbital stability of ground state of I m=2 gamma positif}
For every $\epsilon, b>0$ and $\gamma\geq 0$, there exists $\delta=\delta(b, \epsilon, \gamma) >0$ such that if
\begin{equation*}
\inf_{f \in G_2^{(\gamma)}(\sqrt[4]{3J_2^{(\gamma)}}, b)} \|u_0 - f\|_{H^1}<\delta,
\end{equation*}then we have $\sup_{t \in \mathbb{R}}\inf_{\Psi \in \bigcup_{C(\gamma)^{-1}\leq \theta \leq C(\gamma)} G_2^{(\gamma)}(\sqrt[4]{3J_2^{(\gamma)}}, \theta b)}\|u(t) - \Psi\|_{H^1}<\epsilon$, where $u$ is the solution of equation $(\ref{NLS-Szego quintic R})$ with initial datum $u(0)=u_0$ and $C(\gamma):=\left(\inf_{f \in H^1_+\backslash \{0\}} \frac{\||D|^{\frac{1}{3}} f\|_{L^2}}{\|f\|_{L^6}}\right)^{-1}\sqrt[6]{\frac{J_2^{(\gamma)}}{1+\gamma}}$.
\end{thm}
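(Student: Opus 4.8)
The plan is to argue by contradiction: I would convert a hypothetical destabilizing sequence into a minimizing sequence for $I_2^{(\gamma)}$ and then invoke the concentration--compactness Theorem \ref{existence of minimizer of I m gamma general}, with the three conservation laws $M$, $P$, $E$ supplying all the control. Concretely, suppose the statement fails for some $\epsilon_0,b>0$ and $\gamma\geq 0$; then there are data $u_{0,n}$, ground states $f_n\in G_2^{(\gamma)}(\sqrt[4]{3J_2^{(\gamma)}},b)$ with $\|u_{0,n}-f_n\|_{H^1}<\tfrac1n$, and times $t_n$ with
\begin{equation*}
\inf_{\Psi\in\bigcup_{C(\gamma)^{-1}\leq\theta\leq C(\gamma)}G_2^{(\gamma)}(\sqrt[4]{3J_2^{(\gamma)}},\theta b)}\|u_n(t_n)-\Psi\|_{H^1}\geq\epsilon_0,
\end{equation*}
where $u_n(0)=u_{0,n}$. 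Since $M,P,E$ are conserved and $H^1$--continuous and $\|u_{0,n}-f_n\|_{H^1}\to0$, along a subsequence $M(u_n(t_n))\to\sqrt{3J_2^{(\gamma)}}=:M_\star$, $P(u_n(t_n))\to P_\star$, $E(u_n(t_n))\to E_\star$; moreover $b=\|f_n\|_{L^6}>0$ and $(\ref{Gagliardo-Nirenberg interpolation theta=m/(2m+2)})$ give $b^6\lesssim P(f_n)^2M_\star$, hence $P_\star>0$.

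Next I would show $(u_n(t_n))$ is, after normalization, a minimizing sequence. The algebraic input is that any ground state $f$ with $M(f)=M_\star$ obeys $2E(f)M(f)^2+\gamma P(f)^2M(f)=0$, seen by inserting $\|\partial_x f\|_{L^2}^2=2E(f)+\tfrac13\|f\|_{L^6}^6$ into $I_2^{(\gamma)}(f)=J_2^{(\gamma)}$ and cancelling the $\|f\|_{L^6}^6$--terms via $M(f)^2=3J_2^{(\gamma)}$; in the limit $2E_\star M_\star^2+\gamma P_\star^2M_\star=0$. The same substitution in the conserved quantities yields
\begin{equation*}
I_2^{(\gamma)}(u_n(t_n))=\frac{2E(u_{0,n})M(u_{0,n})^2+\gamma P(u_{0,n})^2M(u_{0,n})}{\|u_n(t_n)\|_{L^6}^6}+\frac{M(u_{0,n})^2}{3},
\end{equation*}
whose first numerator tends to $0$ and whose last term tends to $M_\star^2/3=J_2^{(\gamma)}$. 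I then bound $\|u_n(t_n)\|_{L^6}$ below: if it tended to $0$, energy conservation would force $\|\partial_x u_n(t_n)\|_{L^2}^2\to2E_\star$, which is negative when $\gamma>0$ (impossible) and, when $\gamma=0$, forces $\|\partial_x u_n(t_n)\|_{L^2}\to0$ and thus $P(u_n(t_n))\leq\|u_n(t_n)\|_{L^2}\|\partial_x u_n(t_n)\|_{L^2}\to0$, contradicting $P_\star>0$. Hence $I_2^{(\gamma)}(u_n(t_n))\to J_2^{(\gamma)}$.

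With $\lambda_n,\mu_n>0$ fixing $\|v_n\|_{L^2}=\|v_n\|_{L^6}=1$ for $v_n:=\lambda_n u_n(t_n)(\mu_n\cdot)$, scale invariance gives $I_2^{(\gamma)}(v_n)=I_2^{(\gamma)}(u_n(t_n))\to J_2^{(\gamma)}$; since $M(u_n(t_n))\to M_\star$ and $\|u_n(t_n)\|_{L^6}$ is bounded away from $0$ and $\infty$, the $\lambda_n,\mu_n$ have finite positive limits (along a further subsequence). Theorem \ref{existence of minimizer of I m gamma general} then provides $U\in G_2^{(\gamma)}(1,1)$ and $x_n$ with $\|v_n-U(\cdot-x_n)\|_{H^1}\to0$; undoing the scaling and a final vanishing rescaling yields genuine ground states $\Psi_n\to\Psi_\infty$ (up to translation) with $\Psi_n\in G_2^{(\gamma)}(\sqrt[4]{3J_2^{(\gamma)}},\theta_n b)$, $\theta_n=\|\Psi_n\|_{L^6}/b$, and $\|u_n(t_n)-\Psi_n\|_{H^1}\to0$. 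To place $\theta_n$ in $[C(\gamma)^{-1},C(\gamma)]$ I use the scale invariant ratio $r(g):=\|g\|_{L^6}^6/P(g)^2$ and the strict two--sided bound valid for every ground state $g$ of mass $M_\star$,
\begin{equation*}
\frac{(1+\gamma)M_\star}{J_2^{(\gamma)}}<r(g)<\kappa^{-6}M_\star,\qquad\kappa:=\inf_{f\in H^1_+\setminus\{0\}}\frac{\||D|^{\frac13}f\|_{L^2}}{\|f\|_{L^6}},
\end{equation*}
the upper bound from $\|g\|_{L^6}^6\leq\kappa^{-6}\||D|^{\frac13}g\|_{L^2}^6\leq\kappa^{-6}P(g)^2M_\star$ and the lower bound from $\|g\|_{L^6}^6=(J_2^{(\gamma)})^{-1}(\|\partial_x g\|_{L^2}^2M_\star^2+\gamma P(g)^2M_\star)$ with $\|\partial_x g\|_{L^2}^2\geq P(g)^2/M_\star$. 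Writing $\theta_n^6=(r(\Psi_n)/r(f_n))(P(\Psi_n)/P(f_n))^2$ and using $P(\Psi_n),P(f_n)\to P_\star$, one finds $\theta_n^6\to r(\Psi_\infty)/\lim r(f_n)\in(C(\gamma)^{-6},C(\gamma)^6)$, the strictness inherited from $r(\Psi_\infty)$, with $C(\gamma)^6=\kappa^{-6}J_2^{(\gamma)}/(1+\gamma)$. Thus $\Psi_n$ eventually lies in the target family, so $\inf_\Psi\|u_n(t_n)-\Psi\|_{H^1}\leq\|u_n(t_n)-\Psi_n\|_{H^1}\to0$, contradicting the hypothesis.

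The main obstacle is the lower bound $\liminf_n\|u_n(t_n)\|_{L^6}>0$ established above. At the exact ground--state mass $M_\star^2=3J_2^{(\gamma)}$ the coefficient multiplying $\|u_n(t_n)\|_{L^6}^6$ in the variational identity degenerates---this is precisely the $L^2$--criticality---so the energy alone cannot prevent the $L^6$--mass from escaping. What saves the argument is the conservation of the strictly positive momentum $P(u)=\||D|^{\frac12}u\|_{L^2}^2$ furnished by the Szeg\H{o} projection: it restores the missing coercivity and rules out concentration, exactly the mechanism unavailable for $(\ref{L2 critical focusing NLS equation})$. The only other delicate point, the sharp scaling window $[C(\gamma)^{-1},C(\gamma)]$, is reduced above to the strict two--sided bound on $r(g)$.
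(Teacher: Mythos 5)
Your proposal is correct, and its skeleton is the paper's: conservation of $M,P,E$ turns $u_n(t_n)$ into a minimizing sequence for $I_2^{(\gamma)}$, Theorem \ref{existence of minimizer of I m gamma general} is applied to the $L^2$--$L^6$ normalized sequence, and the scaling is undone to produce the approximating ground states. Indeed, your ground-state identity $2E(f)M(f)^2+\gamma P(f)^2M(f)=0$ and your formula for $I_2^{(\gamma)}(u_n(t_n))$ are exactly the paper's conserved quantity $K_{\gamma}(u)=\frac{\|u\|_{L^6}^6}{6M(u)^2}\bigl(3I_2^{(\gamma)}(u)-M(u)^2\bigr)$, rearranged. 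Where you genuinely diverge is in the two quantitative steps. The paper proves, directly on the solution and using conservation of $P$ and $M$ together with the embedding $\|f\|_{L^6}\leq C_1\||D|^{\frac13}f\|_{L^2}$, the two-sided bound $C(\gamma)^{-1}b\leq\liminf_n\|u^n(t^n)\|_{L^6}\leq\limsup_n\|u^n(t^n)\|_{L^6}\leq C(\gamma)b$; this one estimate simultaneously yields the minimizing-sequence property and places the limit profile's $L^6$ norm in the window $[C(\gamma)^{-1}b,C(\gamma)b]$. You instead (i) obtain the $L^6$ lower bound only qualitatively, via the dichotomy $\gamma>0$ (the limiting kinetic energy $2E_\star<0$, impossible) versus $\gamma=0$ (gradient collapse contradicts $P_\star>0$), and (ii) recover the window from the strict two-sided bound on $r(g)=\|g\|_{L^6}^6/P(g)^2$ over ground states of mass $M_\star$, transferred to $\theta_n$ through momentum conservation. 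Since both routes rest on the same two inequalities (the $\kappa$-embedding and $\||D|^{\frac12}f\|_{L^2}^2\leq\|f\|_{L^2}\|\partial_x f\|_{L^2}$), this is a reorganization rather than a new method; what yours buys is that the limit $\theta_\infty$ lies in the \emph{open} interval $(C(\gamma)^{-1},C(\gamma))$ — strictness inherited from the single profile $\Psi_\infty$, for which interpolation is strict — so $\theta_n$ is eventually admissible with no further discussion, and it isolates more explicitly the mechanism (positivity of the conserved momentum) that restores coercivity at critical mass; what the paper's buys is brevity. Two details to patch in a write-up: your claim that $\|u_n(t_n)\|_{L^6}$ is bounded away from $\infty$ (needed for $\lambda_n,\mu_n$ to have positive finite limits) should be stated as a consequence of $\|u_n(t_n)\|_{L^6}^6\leq\kappa^{-6}P(u_{0,n})^2M(u_{0,n})$, the same inequality you apply to ground states; and $r$ is not invariant under general scalings $\lambda g(\mu\cdot)$ (it scales by $\lambda^2\mu^{-1}$), only under mass-preserving ones — harmless, since you evaluate it only on functions of mass exactly $M_\star$.
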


\noindent The problem of uniqueness of ground states of a non-local elliptic equation is difficult (See Frank--Lenzmann $[\ref{Frank Lenzmann Uniqueness of non linear ground states}]$ for the fractional Laplacians in $\mathbb{R}$ and also Lenzmann--Sok $[\ref{Lenzmann sok rearrangement of fourier modes}]$ for a strict rearrangement principle in Fourier space). We refer to subsection $\ref{subsection of Problem of uniqueness of ground states for general gamma}$ to discuss the classification of ground states of $I^{(\gamma)}_2$. For general $\gamma\geq 0$, we only have the $H^1-$orbital stability with scaling. Since we do not know the uniqueness of ground states of $I_2^{(\gamma)}$, the $L^6-$norm of the ground state $\Psi$ that approaches $u(t)$ is unknown. We can only give a range $\frac{\|f\|_{L^6}}{C(\gamma)}  \leq \|\Psi\|_{L^6} \leq C(\gamma)\|f\|_{L^6}$, where $f$ denotes the ground state that approaches the initial datum $u_0$.\\

\noindent On the other hand, all the ground states can be completely classified in the case $\gamma=2$, by using Cauchy--Schwarz inequality. The ground state of $I_2^{(2)}$ is unique up to scaling, phase rotation and spatial translation. 

\begin{prop}\label{Cauchy Schwarz estimate to find minimizer}
In the case $\gamma=m=2$, we have
\begin{equation}\label{classification of the case m=2}
J_2^{(2)}=\min_{f\in H^1_+ \backslash \{0\}}I_2^{(2)}(f)=\frac{8\pi^2}{3}, \qquad G_2^{(2)} = \{x\mapsto\frac{\lambda e^{i\theta}}{\mu x+ y +i} \in H^1_+: \forall \lambda, \mu >0 \quad \forall \theta, y \in \mathbb{R}\}.
\end{equation} 
\end{prop}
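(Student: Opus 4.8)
The plan is to exploit the special algebraic structure at $\gamma=2$, where the functional $I_2^{(2)}(f)$ becomes expressible entirely through $L^2$-type Fourier-weighted quantities that linearize under Cauchy--Schwarz. Writing out the numerator for $m=2$, we have $\|\partial_x f\|_{L^2}^2\|f\|_{L^2}^4 + 2\||D|^{\frac12}f\|_{L^2}^4\|f\|_{L^2}^2$. Using Plancherel on $H^1_+$, where $\hat f$ is supported in $\xi\geq 0$, these become $\|f\|_{L^2}^4\int_0^\infty \xi^2|\hat f(\xi)|^2\,\frac{\mathrm{d}\xi}{2\pi}$ and $2\|f\|_{L^2}^2\bigl(\int_0^\infty \xi|\hat f(\xi)|^2\,\frac{\mathrm{d}\xi}{2\pi}\bigr)^2$. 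The key identity I would seek is a pointwise-in-Fourier algebraic bound: for nonnegative quantities, combine these so that the numerator is minorized by a perfect combination. Concretely, I expect an inequality of the form $\|\partial_x f\|_{L^2}^2\|f\|_{L^2}^2 + 2\||D|^{\frac12}f\|_{L^2}^4 \geq (\text{something tight for } 1/(x+i))$, which when multiplied by $\|f\|_{L^2}^2$ and divided by $\|f\|_{L^6}^6$ yields the constant $8\pi^2/3$.

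First I would compute all relevant norms explicitly for the candidate $f_0(x)=\frac{1}{x+i}$, confirming $J_2^{(2)}=\frac{8\pi^2}{3}$ is attained: one evaluates $\|f_0\|_{L^2}^2$, $\||D|^{\frac12}f_0\|_{L^2}^2$, $\|\partial_x f_0\|_{L^2}^2$, and $\|f_0\|_{L^6}^6$ by contour integration or by the Fourier transform $\widehat{f_0}(\xi)=c\,e^{-\xi}\mathbf{1}_{\xi\geq 0}$ (up to constants). This simultaneously fixes the value of the infimum and verifies $f_0$ is a minimizer. Then, to prove it is the \emph{only} minimizer up to the stated symmetries, I would reduce the problem using the scaling/phase/translation invariance already recorded for $I_2^{(\gamma)}$ to normalize, say, a minimizer $g$ with prescribed $\|g\|_{L^2}$ and a fixed center, so that any minimizer must coincide with a fixed representative.

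The heart of the argument is the equality case in Cauchy--Schwarz, as the proposition's name signals. I would apply Cauchy--Schwarz to a well-chosen pair of functions of $\xi$ on $(0,\infty)$ so that the numerator quantities are matched against $\|f\|_{L^6}^6$; since $L^6$ on the line relates to a Fourier convolution of $\hat f$ with itself twice (the transform of $|f|^2 f$ involves triple convolution), the cleanest route may instead be to bound $\|f\|_{L^\infty}$ or use the Hardy-space pointwise representation $f(x)=\frac{1}{2\pi}\int_0^\infty e^{ix\xi}\hat f(\xi)\,\mathrm{d}\xi$ together with Cauchy--Schwarz against the weight $(\xi^2+2\xi\lambda+\dots)$ emerging from the numerator. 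Equality in Cauchy--Schwarz then forces $\hat f(\xi)$ to be proportional to $e^{-a\xi}$ for some $a>0$ on $\xi\geq 0$, which upon inverse transform gives exactly $\frac{\lambda e^{i\theta}}{\mu x + y + i}$ after restoring the symmetries.

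The main obstacle I anticipate is engineering the correct Cauchy--Schwarz pairing so that its equality case is rigid enough to pin down $\hat f(\xi)\propto e^{-a\xi}$ and no other profile — the difficulty is that $\|f\|_{L^6}^6$ is not itself a simple quadratic Fourier functional, so one must either pass through an intermediate sharp estimate (e.g. relating $\|f\|_{L^6}^6$ to a quantity controlled by the two quadratic numerator terms via an exact computation valid only for exponential Fourier profiles) or exploit that the extremizers of the underlying Gagliardo--Nirenberg-type inequality on the Hardy space $H^1_+$ are already constrained to rational functions. Handling the equality-case rigidity, rather than the value $\frac{8\pi^2}{3}$ itself, is where the real work lies.
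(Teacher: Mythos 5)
Your proposal correctly guesses the shape of the argument (sharp inequality attained by $1/(x+i)$, then rigidity of the Cauchy--Schwarz equality case forcing an exponential Fourier profile), but it stops exactly where the proof has to start: you never exhibit the Cauchy--Schwarz pairing, and you concede that "handling the equality-case rigidity\dots is where the real work lies." That missing step is the entire content of the paper's proof, and it is quite specific. Since $\hat f$ is supported in $[0,+\infty)$, Plancherel applied to $f^3$ gives
\begin{equation*}
\|f\|_{L^6}^6=\frac{1}{32\pi^5}\int_{\xi>0}\bigl|(\hat f*\hat f*\hat f)(\xi)\bigr|^2\,\mathrm{d}\xi,
\end{equation*}
and for each fixed $\xi>0$ one applies Cauchy--Schwarz against the constant function $1$ on the triangle $\{\eta_1,\eta_2>0,\ \eta_1+\eta_2\le\xi\}$, whose area $\xi^2/2$ is what produces the weight $\xi^2$. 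Integrating in $\xi$, substituting $\eta_3=\xi-\eta_1-\eta_2$, and expanding $(\eta_1+\eta_2+\eta_3)^2$ by symmetry yields exactly
\begin{equation*}
\|f\|_{L^6}^6\le\frac{3}{8\pi^2}\Bigl(\|\partial_x f\|_{L^2}^2\|f\|_{L^2}^4+2\,\||D|^{\frac12}f\|_{L^2}^4\|f\|_{L^2}^2\Bigr),
\end{equation*}
which is precisely why the coefficient $\gamma=2$ (and no other $\gamma$) admits this clean treatment: the cross terms $2\sum_{j<k}\eta_j\eta_k$ generate the momentum term with coefficient $2$. Equality then forces the integrand $\hat f(\eta_1)\hat f(\eta_2)\hat f(\xi-\eta_1-\eta_2)$ to be constant on each simplex, i.e. $\hat f(\eta_1)\hat f(\eta_2)\hat f(\eta_3)=\hat f(0)^2\hat f(\eta_1+\eta_2+\eta_3)$ for all $\eta_j>0$, and this multiplicative functional equation gives $\hat f(\eta)=\hat f(0)e^{-ip\eta}$ with $\mathrm{Im}\,p<0$, hence $f(x)=\mathcal{A}/(x-p)$; the invariances of $I_2^{(2)}$ finish the classification.

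The concrete alternatives you float would not close this gap. Bounding $\|f\|_{L^6}^6\le\|f\|_{L^\infty}^4\|f\|_{L^2}^2$ and then estimating $\|f\|_{L^\infty}$ by Cauchy--Schwarz on the Fourier representation cannot yield the classification: equality in the first step would require $|f|$ to take only the values $0$ and $\|f\|_{L^\infty}$ almost everywhere, which no nonzero function of $H^1_+$ does, so that chain is strictly lossy and its equality case is vacuous; moreover it produces a power of a sum of quadratic Fourier quantities rather than the precise combination $\|\partial_x f\|_{L^2}^2\|f\|_{L^2}^4+2\||D|^{\frac12}f\|_{L^2}^4\|f\|_{L^2}^2$. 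Your fallback --- "exploit that the extremizers\dots are already constrained to rational functions" --- is circular, since that constraint is exactly what must be proved. A smaller correction: the equality case gives $\hat f(\xi)\propto e^{-ip\xi}$ with $p$ complex, $\mathrm{Im}\,p<0$, not merely $e^{-a\xi}$ with $a>0$ real; the real part of $p$ is what produces the translation parameter $y$ in $G_2^{(2)}$.
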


\noindent If $u(t,x)=e^{i\omega t}Q(x+ct)$ is a traveling wave of $(\ref{NLS-Szego quintic R})$ and $Q \in G_2^{(2)}$, then we have 
\begin{equation*}
3c^2=8 \omega, \quad\|Q\|_{L^2}^4=8\pi^2, \quad \|Q\|_{L^6}^6 = \frac{3\pi c^2}{\sqrt{2}}, \quad \||D|^{\frac{1}{2}}Q\|_{L^2}^2=\frac{\pi c}{\sqrt{2}}, \quad  \|\partial_x Q\|_{L^2}^2=\frac{\pi c^2}{2\sqrt{2}},
\end{equation*}thanks to the classification of $G^{(2)}_2$. In this case, the traveling wave $u_c(t,x)=e^{\frac{3c^2 i t}{8}}Q_c(x+ct)$ is $H^1-$orbitally stable, for every $c>0$, where $Q_c \in  G^{(2)}_2(\sqrt[4]{8\pi^2}, \sqrt[6]{\frac{3\pi c^2}{\sqrt{2}}})$.

\begin{thm}\label{Orbital stability of ground state of I m=2 gamma=2}
For every $\epsilon, c>0$, there exists $\delta_{\epsilon,c}>0$ such that if $\inf_{f\in \tilde{G}(\sqrt[4]{8\pi^2}, \sqrt[6]{\frac{3\pi c^2}{\sqrt{2}}})}\|u_0-f\|_{H^1} < \delta$, then we have $\sup_{t \in \mathbb{R}}\inf_{f\in \tilde{G}(\sqrt[4]{8\pi^2}, \sqrt[6]{\frac{3\pi c^2}{\sqrt{2}}})}\|u(t)-f\|_{H^1} < \epsilon$, where $u$ solves equation $(\ref{NLS-Szego quintic R})$ with initial datum $u(0)=u_0$.
\end{thm}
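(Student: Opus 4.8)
The plan is to bootstrap from the already-available orbital stability \emph{with} scaling (Theorem~\ref{Orbital stability of ground state of I m=2 gamma positif}) and to use the conservation of the momentum $P(u)=\||D|^{\frac12}u\|_{L^2}^2$ in order to eliminate the residual scaling freedom. The mechanism that makes this possible for $\gamma=2$, and that is unavailable for general $\gamma$, is the complete classification of $G_2^{(2)}$ in Proposition~\ref{Cauchy Schwarz estimate to find minimizer}: it turns the momentum into a strictly monotone function of the scaling parameter along the ground-state family, so that the conserved pair $(M,P)$ rigidly determines the $L^6$-scale.

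First I would tabulate the conserved quantities on the classified ground states. Writing a generic element of $G_2^{(2)}$ as $Q(x)=\frac{\lambda e^{i\theta}}{\mu x+y+i}$, an explicit computation (based on $Q_0(x)=(x+i)^{-1}$, whose Fourier transform is supported on $\{\xi>0\}$ with $|\widehat{Q_0}(\xi)|^2=4\pi^2e^{-2\xi}$) gives $M(Q)=\frac{\pi\lambda^2}{\mu}$, $P(Q)=\frac{\pi\lambda^2}{2}$ and $\|Q\|_{L^6}^6=\frac{3\pi\lambda^6}{8\mu}$. Eliminating $\lambda,\mu$ produces the exact identity
\[
\|Q\|_{L^6}^6=\frac{3}{2\pi^2}\,M(Q)\,P(Q)^2,\qquad Q\in G_2^{(2)}.
\]
Setting $a_0=\sqrt[4]{8\pi^2}$ and $b_0=\sqrt[6]{3\pi c^2/\sqrt2}$, on the one-scale family $G_2^{(2)}(a_0,\theta b_0)$ one has $M\equiv a_0^2$ and $\|\cdot\|_{L^6}^6\equiv\theta^6 b_0^6$, whence $P\equiv\frac{\pi c}{\sqrt2}\,\theta^3$; this is strictly increasing in $\theta$ and equals $\frac{\pi c}{\sqrt2}$ exactly at $\theta=1$.

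Next I would carry out the comparison. Fix $\epsilon>0$ and an auxiliary $\epsilon'>0$. Theorem~\ref{Orbital stability of ground state of I m=2 gamma positif} (with $\gamma=2$, $b=b_0$) provides $\delta'$ such that, if $u_0$ is $\delta'$-close to $\tilde G(a_0,b_0)=G_2^{(2)}(a_0,b_0)$, then for every $t$ there is $\Psi_t\in G_2^{(2)}(a_0,\theta_t b_0)$ with $\theta_t\in[C(2)^{-1},C(2)]$ and $\|u(t)-\Psi_t\|_{H^1}<\epsilon'$. Since $P$ is continuous on $H^1$ and uniformly bounded on this family, $|P(u(t))-P(\Psi_t)|\lesssim\epsilon'$; since $P$ is conserved and $u_0$ is $\delta$-close to $\tilde G(a_0,b_0)$ on which $P\equiv\frac{\pi c}{\sqrt2}$, also $|P(u(t))-\frac{\pi c}{\sqrt2}|=|P(u_0)-\frac{\pi c}{\sqrt2}|\lesssim\delta$. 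As $P(\Psi_t)=\frac{\pi c}{\sqrt2}\theta_t^3$, this forces $|\theta_t^3-1|\lesssim_c(\epsilon'+\delta)$, hence $\theta_t\to1$ uniformly in $t$ as $\epsilon',\delta\to0$. Finally, writing $\Psi_t=e^{i\vartheta}\Psi_{\theta_t}(\cdot+y/\mu(\theta_t))$ with $\Psi_\theta(x)=\frac{\lambda(\theta)}{\mu(\theta)x+i}$, translation- and phase-invariance of the $H^1$ norm yield $\mathrm{dist}_{H^1}(\Psi_t,\tilde G(a_0,b_0))\le\|\Psi_{\theta_t}-\Psi_1\|_{H^1}$, a quantity depending only on $\theta_t$ that tends to $0$ as $\theta_t\to1$; combining, $\mathrm{dist}_{H^1}(u(t),\tilde G(a_0,b_0))\le\epsilon'+\|\Psi_{\theta_t}-\Psi_1\|_{H^1}$, and choosing $\epsilon'$ and then $\delta$ small makes the right-hand side $<\epsilon$ for all $t$.

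I expect the only delicate point to be the uniform-in-$t$ continuity estimate of the last step, i.e.\ that $\mathrm{dist}_{H^1}(\Psi_t,\tilde G(a_0,b_0))$ is controlled by a translation- and phase-independent modulus $\theta\mapsto\|\Psi_\theta-\Psi_1\|_{H^1}$; this is exactly where the explicit classification is indispensable, as it lets one factor out the symmetry group and reduce to a one-parameter continuity. The conceptual heart, however, is the strict monotonicity of $P$ in $\theta$: for $\gamma=2$ it follows from the classification, whereas for general $\gamma$ there is no single-valued relation between $(M,P)$ and the $L^6$-norm, which is precisely why only stability with scaling is obtained there.
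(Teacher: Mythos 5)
Your proposal is correct, and it reaches the theorem by a genuinely different route from the paper's. The paper does not bootstrap from Theorem~\ref{Orbital stability of ground state of I m=2 gamma positif}: it re-runs the compactness machinery. For $u_0^n$ approaching $G_2^{(2)}(\sqrt[4]{8\pi^2},b_0)$ and an arbitrary temporal sequence $(t^n)$, it rescales $v_n=\lambda_n u^n(t^n,\mu_n\cdot)$ to $\|v_n\|_{L^2}=\|v_n\|_{L^6}=1$, applies Theorem~\ref{existence of minimizer of I m gamma general} to get $\|v_n-V(\cdot-y_n)\|_{H^1}\to0$ with $V\in G_2^{(2)}(1,1)$, and only then invokes Proposition~\ref{Cauchy Schwarz estimate to find minimizer} together with conservation of $P$: the classification gives $\||D|^{\frac12}V\|_{L^2}^2=\sqrt{2/3}\,\pi$, while $\||D|^{\frac12}u^n(t^n)\|_{L^2}^2=\||D|^{\frac12}u_0^n\|_{L^2}^2\to\frac{\pi c}{\sqrt2}$, which forces the scaling parameters to converge, $\lambda_\infty^2=\frac{2}{\sqrt3\,c}$ and $\mu_\infty=\frac{4\sqrt2\pi}{\sqrt3\,c}$, so that the limiting profile $\tilde\Psi=\lambda_\infty^{-1}V(\cdot/\mu_\infty)$ lies in $G_2^{(2)}(\sqrt[4]{8\pi^2},b_0)$. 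Your mechanism is the same one --- the classification turns the conserved momentum into a rigid determination of the $L^6$-scale --- but you apply it \emph{after} the compactness step rather than inside it: you use Theorem~\ref{Orbital stability of ground state of I m=2 gamma positif} as a black box, observe that $P\equiv\frac{\pi c}{\sqrt2}\theta^3$ on $G_2^{(2)}(\sqrt[4]{8\pi^2},\theta b_0)$ (your computations $M=\pi\lambda^2/\mu$, $P=\pi\lambda^2/2$, $\|Q\|_{L^6}^6=3\pi\lambda^6/(8\mu)$ are correct and consistent with $(\ref{norm classification of G 2 2})$ and with the paper's value $\sqrt{2/3}\,\pi$), deduce $\theta_t\to1$ uniformly in $t$ from conservation of $P$, and conclude by continuity of the explicit branch $\theta\mapsto\Psi_\theta$ in $H^1$ after quotienting by phase and translation --- a legitimate step, since the $H^1$ distance and the target set are both invariant under these symmetries and $\lambda(\theta),\mu(\theta)$ are continuous. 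What your route buys is modularity: the theorem becomes a corollary of Theorem~\ref{Orbital stability of ground state of I m=2 gamma positif}, Proposition~\ref{Cauchy Schwarz estimate to find minimizer} and conservation of momentum, with no second pass through profile decomposition. What the paper's route buys is the explicit identification of the limiting ground state (including its scaling parameters) along arbitrary time sequences. In effect, your argument is a rigorous implementation of the heuristic the paper records in the remark following the theorem statement.
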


\begin{rem}
In the case $\gamma=2$, since all the ground states are completely classified and unique up to scaling, phase rotation and spatial translation by proposition $\ref{Cauchy Schwarz estimate to find minimizer}$, we obtain the $\dot{H}^{\frac{1}{2}}-$norm of the ground state that approaches $u(t)$ by the conservation law $P(u)=\langle Du, u\rangle_{L^2}$ and formula $(\ref{limit of difference between profile and original functions})$. Then we know also the $L^6-$norm of the very ground state and we have the actual orbital stability without scaling, which is a refinement of theorem $\ref{Orbital stability of ground state of I m=2 gamma positif}$.
\end{rem}

\noindent Similar results on the classification of ground states of traveling waves by using Cauchy--Schwarz inequality can be found in Foschi $[\ref{Foschi, Maximizers for the Strichartz Inequality}]$ for linear Schr\"odinger equation and linear wave equation on $\mathbb{R}^d$, for $d \geq 1$, G\'erard--Grellier $[\ref{gerardgrellier1}, \ref{Gerard grellier book cubic szego equation and hankel operators}]$ for the cubic Szeg\H{o} equation on the torus $\mathbb{S}^1$, Pocovnicu $[\ref{pocovnicu Traveling waves for the cubic Szego eq}]$ for the cubic Szeg\H{o} equation on the line $\mathbb{R}$ and Gassot $[\ref{Gassot radially symmetric traveling waves}]$ for the non linear Schr\"odinger equation on the Heisenberg group.\\

\noindent In the case $\gamma =0$, we have 
\begin{equation*}
I_2^{(0)}(f):=\frac{\|\partial_x f\|_{L^2}^2 \| f\|_{L^2}^4}{\| f\|_{L^6}^6}, \qquad \forall f \in H^1(\mathbb{R})\backslash \{0\}.
\end{equation*}All of the ground states in $H^1(\mathbb{R})$ of $I_2^{(0)}$ have been completely classified in Weinstein $[\ref{Weinstein nls sharp interpolation estimates}]$. We know $\min_{f \in H^1(\mathbb{R})\backslash \{0\}} I_2^{(0)}(f)=\frac{\pi^2}{4}$ and there exists a unique real-valued, positive, spherically symmetric and decreasing function $R(x)=\frac{\sqrt[4]{3}}{\sqrt{\cosh(2x)}}$ such that
\begin{equation*}
(I_2^{(0)})^{-1}(\frac{\pi^2}{4})=\{\lambda  e^{i\theta} R(\mu \cdot -y) : \lambda, \mu>0 \quad \theta, y \in \mathbb{R}\}.
\end{equation*}The traveling wave $U(t,x)=e^{i\omega t} R(x)$ is an unstable solution of the $L^2-$critical focusing Sch\"odinger equation $(\ref{L2 critical focusing NLS equation})$ in the following sense: there exists a sequence $u_0^{(n)}=(1+\frac{1}{n})R \subset H^1(\mathbb{R})$ such that $u_0^{(n)} \to R$, as $n \to +\infty$, but the corresponding maximal solution $u^{(n)}$ blows up in finite time. We denote by $Q^{(0)} \in G_2^{(0)}(\sqrt[4]{3J_2^{(0)}},\|Q^{(0)}\|_{L^6})$ one of the ground states of $I_2^{(0)}$ in Hardy space $H^1_+$. Since $R\notin H^1_+$ and $Q_+: x\mapsto \frac{1}{x+i} \in H^1_+$, we have $\frac{\pi^2}{4}=I_2^{(0)}(R) < J_2^{(0)}=I_2^{(0)}(Q^{(0)})  \leq I_2^{(0)}(Q_+)=\frac{4\pi^2}{3}$.\\

\noindent In proposition $\ref{small mass scatter theorem}$, we know that the solution of equation $(\ref{NLS-Szego quintic R})$ and equation $(\ref{L2 critical focusing NLS equation})$ scatters if the initial datum has sufficiently small mass. Furthermore, Dodson $[\ref{Dodson, Global well-posedness and scattering }]$ has proved that if $\|U_0\| < \|R\|_{L^2}$, then equation $(\ref{L2 critical focusing NLS equation})$ is globally well-posed and the solution $L^2-$scatters both forward and backward in time. Together with the instability result of traveling waves by Weinstein $[\ref{Weinstein nls sharp interpolation estimates}]$, the scattering mass threshold of equation $(\ref{L2 critical focusing NLS equation})$ is equal to the mass of ground state $R\in H^1(\mathbb{R})$ of $I_2^{(0)}$.\\

\noindent On the other hand, adding the Szeg\H{o} projector in front of the non linear term of the $L^2-$critical focusing Schr\"odinger equation makes the scattering mass threshold strictly less than the mass of ground state $Q^{(0)} \in H^1_+$ of $I_2^{(0)}$, thanks to the orbital stability theorem $\ref{Orbital stability of ground state of I m=2 gamma positif}$. We define $\mathcal{E} \subset \mathbb{R}_+^*$ to be all $\epsilon>0$ such that if $\|u_0\|_{L^2}<\epsilon$, the corresponding solution of $(\ref{NLS-Szego quintic R})$ $L^2-$scatters both forward and backward in time. If an $H^1-$solution of equation $(\ref{NLS-Szego quintic R})$ $L^2-$scatters, then it also $H^1-$scatters and its $L^r-$norm decays, with $2<r\leq +\infty$. Thus traveling waves do not $L^2-$scatter and neither does the solution that approaches the traveling wave.

\begin{cor}\label{scattering threshold reduced}
$\sup \mathcal{E} < \|Q^{(0)}\|_{L^2}= \sqrt[4]{3J_2^{(0)}}$. Precisely, there exists $u\in C(\mathbb{R}; H^1_+)$ solving equation $(\ref{NLS-Szego quintic R})$ such that $\|u(0)\|_{L^2}< \|Q^{(0)}\|_{L^2}$ and $u$ does not $L^2-$scatter neither forward nor backward in time.
\end{cor}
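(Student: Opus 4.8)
The plan is to exhibit a global $H^1_+$ solution whose mass lies strictly below $\|Q^{(0)}\|_{L^2}$ and which fails to $L^2$-scatter, using the orbital stability of the $\gamma=0$ standing wave. Recall that $Q^{(0)}$ is a ground state of $I_2^{(0)}$ in $H^1_+$ normalized by $\|Q^{(0)}\|_{L^2}^4=3J_2^{(0)}$; since the associated traveling wave has speed $c=0$, the function $(t,x)\mapsto e^{i\omega t}Q^{(0)}(x)$ is a standing-wave solution of $(\ref{NLS-Szego quintic R})$ whose $L^6$-norm is constant in time, so it does not $L^2$-scatter. By itself this only yields $\sup\mathcal{E}\le\|Q^{(0)}\|_{L^2}$; to reach the strict inequality I would perturb $Q^{(0)}$ so as to lower its mass while staying inside the orbital-stability neighborhood.

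Concretely, I would set $b:=\|Q^{(0)}\|_{L^6}$ and invoke Theorem \ref{Orbital stability of ground state of I m=2 gamma positif} in the case $\gamma=0$: given $\epsilon>0$ it produces $\delta>0$ such that any datum within $H^1$-distance $\delta$ of $G_2^{(0)}(\sqrt[4]{3J_2^{(0)}},b)$ generates a solution that stays within $H^1$-distance $\epsilon$ of the scaled orbit $\bigcup_{C(0)^{-1}\le\theta\le C(0)}G_2^{(0)}(\sqrt[4]{3J_2^{(0)}},\theta b)$ for all time. I would then choose the explicit datum $u_0=(1-\eta)Q^{(0)}\in H^1_+$ with $\eta>0$ small, so that $\|u_0-Q^{(0)}\|_{H^1}=\eta\|Q^{(0)}\|_{H^1}<\delta$ and $\|u_0\|_{L^2}=(1-\eta)\sqrt[4]{3J_2^{(0)}}<\|Q^{(0)}\|_{L^2}$. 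Global well-posedness (Theorem \ref{global wellposedness for L2 super critical nls equation} with $m=2$) guarantees that the resulting solution $u$ lies in $C(\mathbb{R};H^1_+)$.

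The heart of the argument is to turn orbital stability into non-scattering. For each $t$, orbital stability supplies a ground state $\Psi_t$ in the scaled orbit with $\|u(t)-\Psi_t\|_{H^1}<\epsilon$; since $\|\Psi_t\|_{L^6}\ge C(0)^{-1}b$ and the one-dimensional embedding $H^1(\mathbb{R})\hookrightarrow L^6$ bounds $\|u(t)-\Psi_t\|_{L^6}$ by a fixed multiple of $\|u(t)-\Psi_t\|_{H^1}$, choosing $\epsilon$ small enough yields $\inf_{t\in\mathbb{R}}\|u(t)\|_{L^6}>0$. On the other hand, an $L^2$-scattering $H^1_+$ solution also $H^1$-scatters and therefore satisfies $\|u(t)\|_{L^6}\to0$ as $t\to\pm\infty$, as noted just before the statement. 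These two facts contradict each other, so $u$ scatters neither forward nor backward in time.

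It then remains to read off the threshold. Since $u$ is a non-scattering solution with $\|u(0)\|_{L^2}=m<\|Q^{(0)}\|_{L^2}$, no $\epsilon>m$ can belong to $\mathcal{E}$ --- the datum $u_0$ has mass below $\epsilon$ yet does not scatter --- so $\sup\mathcal{E}\le m<\|Q^{(0)}\|_{L^2}$. I expect the principal difficulty to be the rigorous passage from the scaling version of orbital stability to a uniform-in-time positive lower bound on $\|u(t)\|_{L^6}$: one must verify that allowing the $L^6$-scaling parameter $\theta$ to range over $[C(0)^{-1},C(0)]$ still keeps the $L^6$-norms of the approximating ground states bounded away from zero, which is exactly the mechanism that forbids the perturbed solution from dispersing.
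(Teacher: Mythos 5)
Your proposal is correct and follows essentially the same route as the paper: the paper likewise combines the orbital stability theorem (Theorem \ref{Orbital stability of ground state of I m=2 gamma positif} with $\gamma=0$) with the persistence of regularity of scattering (Proposition \ref{persistence of regularity of scattering}) and the $L^r$-decay lemma (Lemma \ref{Lr estimates to 0 when time go to infinity}) to conclude that any solution staying near the (scaled) ground-state orbit has $L^6$-norm bounded below and hence cannot $L^2$-scatter, and then takes data of mass strictly below $\|Q^{(0)}\|_{L^2}$ in that neighborhood. Your explicit choice $u_0=(1-\eta)Q^{(0)}$ and the quantitative passage from the scaled orbit ($\|\Psi\|_{L^6}\geq C(0)^{-1}b$ plus the Sobolev embedding $H^1\hookrightarrow L^6$) to the uniform lower bound on $\|u(t)\|_{L^6}$ correctly fill in the details the paper leaves implicit.
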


\begin{rem}
The mass of ground state of $I_2^{(2)}$ is strictly larger than the mass of ground state of $I_2^{(0)}$. $\|Q^{(2)}\|_{L^2}^4=8\pi^2 > 4\pi^2\geq \|Q^{(0)}\|_{L^2}^4$. 
\begin{equation*}
E(Q^{(2)}) = -\frac{\pi c^2}{4\sqrt{2}} <0 = E(Q^{(0)}).
\end{equation*}
\end{rem}

\noindent The value of scattering mass threshold of equation $(\ref{NLS-Szego quintic R})$ remains as an open problem. Due to lack of Galilean invariance, we cannot use the Morawetz estimate here. We do not know whether the scattering mass threshold of equation $(\ref{NLS-Szego quintic R})$ is equal to the scattering mass threshold of equation $(\ref{L2 critical focusing NLS equation})$. \\

\noindent This paper is organized as follows. In section $\ref{Profile decomposition section}$, we recall profile decomposition theorem prove theorem $\ref{existence of minimizer of I m gamma general}$. In section $\ref{Traveling wave of ground state section}$, the orbital stability of traveling wave $u(t,x)=e^{i \omega t}Q(x+ct)$ is proved at first. Then we give the details of the special case $\gamma=2$. In the first appendix, we prove the persistence of regularity of scattering. In the second appendix, we discuss the open problem of uniqueness of ground states of the functional $I_2^{(\gamma)}$, for general $\gamma \geq 0$. It suffices to study the uniqueness of ground states modulo positive Fourier transform.

\bigskip
\bigskip

\begin{center}
$Acknowledgments$
\end{center}
The author would like to express his gratitude towards Patrick G\'erard for his deep insight, generous advice and continuous encouragement.

\bigskip
\bigskip

\section{Profile decomposition}\label{Profile decomposition section}
At first, we recall the result of profile decomposition theorem in G\'erard $[\ref{gerard profile decomposition}]$, which is a refinement of concentration-compactness argument of Sobolev embedding introduced in Lions $[\ref{Lions concentration compactness 1}, \ref{Lions concentration compactness 2}]$. Every bounded sequence in $H^1_+$ has a subsequence which can be written as a nearly orthogonal sum of a superposition of sequence of shifted profiles and a sequence tending to zero in $L^p (\mathbb{R})$, for every $2<p \leq +\infty$. It will be used to find the minimizers of some functionals in calculus of variation and establish the orbital stability of some traveling waves. We shall use the version of Hmidi--Keraani $[\ref{Hmidi--Keraani Blowup theory profile decomposition}]$ and  construct the profiles without scaling.
\begin{thm}[G\'erard \ref{gerard profile decomposition}, Hmidi--Keraani \ref{Hmidi--Keraani Blowup theory profile decomposition}]\label{Profile decomposition H1+}
If $(f_n)_{n\in \mathbb{N}_+}$ is a bounded sequence in $H^1_+$, then there exists a subsequence of $(f_n)_{n\in \mathbb{N}_+}$, denoted by $(f_{\phi(n)})_{n\in \mathbb{N}_+}$, a sequence of profiles $(U^{(j)})_{j\in \mathbb{N}_+}\subset H^1_+$ and a double-indexed sequence $(x_n^{(j)})_{n,j\in \mathbb{N}_+}\subset \mathbb{R}$ such that if $j\ne k$, then $|x_n^{(j)}-x_n^{(k)}| \to +\infty$ and for every $l \in \mathbb{N}_+$, we have
\begin{equation}\label{profile decomposition formula}
f_{\phi(n)}(x) = \sum_{j=1}^l U^{(j)}(x -x_n^{(j)}) + r_n^{(l)}(x), \qquad \forall x \in \mathbb{R},
\end{equation}where $\limsup_{n\to+\infty}\|r_n^{(l)}\|_{L^p} \to 0$, as $l \to +\infty$, $\forall 2<p \leq +\infty$. For every $l\in \mathbb{N}_+$ and $s \in [0,1]$,  
\begin{equation}\label{near orthogonality dot(H) s}
\Big| \||D|^{s} f_{\phi(n)}\|_{L^2}^2-\sum_{j=1}^l \||D|^{s}  U^{(j)}\|_{L^2}^2 -\||D|^{s}  r_n^{(l)}\|_{L^2}^2 \Big| \to 0, \qquad \mathrm{as} \qquad n \to +\infty.
\end{equation}
\end{thm}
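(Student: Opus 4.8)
The plan is to construct the profiles one at a time by iterated extraction of weak limits of spatial translates, following the concentration--compactness scheme of G\'erard and its variant by Hmidi--Keraani, and to observe that the whole argument stays inside the Hardy space. Two structural facts drive everything. First, $L^2_+$ is a closed subspace of $L^2(\mathbb{R})$ that is invariant under translations and under every Fourier multiplier $|D|^s$ (these act only on the region $\xi\geq 0$); since a closed subspace is weakly closed, every weak $H^1$-limit of a sequence in $H^1_+$ again lies in $H^1_+$, so all the profiles and remainders produced below automatically belong to $H^1_+$. Second, because $\|r_n^{(l)}\|_{L^2}$ stays bounded, the interpolation $\|r_n^{(l)}\|_{L^p}\leq \|r_n^{(l)}\|_{L^2}^{2/p}\|r_n^{(l)}\|_{L^\infty}^{1-2/p}$ reduces the required smallness for $2<p\leq+\infty$ to smallness in $L^\infty$ alone. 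Thus it suffices to exhaust the defect of compactness of the embedding $H^1(\mathbb{R})\hookrightarrow L^\infty(\mathbb{R})$ by translation only; no scaling is needed, which is why the profiles carry no dilation parameter.

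The engine is a single-profile extraction lemma. For a bounded sequence $(g_n)\subset H^1_+$ I would set
\[
\eta(\{g_n\}) := \sup\Big\{\|w\|_{L^\infty} : w=\text{w-}\lim_k g_{n_k}(\cdot+y_k)\ \text{in }H^1\ \text{for some }(n_k)\uparrow,\ (y_k)\subset\mathbb{R}\Big\}.
\]
The central elementary estimate is $\limsup_n\|g_n\|_{L^\infty}\leq \eta(\{g_n\})$: choosing $y_n$ with $|g_n(y_n)|$ nearly maximal and extracting a weak limit $w$ of $g_n(\cdot+y_n)$, the one-dimensional compact embedding $H^1([-M,M])\hookrightarrow C([-M,M])$ forces $g_n(\cdot+y_n)\to w$ locally uniformly, whence $\limsup_n\|g_n\|_{L^\infty}=|w(0)|\leq\|w\|_{L^\infty}\leq\eta(\{g_n\})$. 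In particular, if $\eta(\{g_n\})=0$ the sequence vanishes in $L^\infty$. When $\eta(\{g_n\})>0$ I extract a profile: pick a subsequence and shifts $x_n$ with $g_n(\cdot+x_n)\rightharpoonup U$ and $\|U\|_{L^\infty}\geq\tfrac12\eta(\{g_n\})$, and subtract it. By Agmon's inequality $\|U\|_{L^\infty}^2\lesssim\|U\|_{L^2}\|\partial_x U\|_{L^2}\leq\|U\|_{H^1}^2$, so control of $\eta$ is tied to the $H^1$-size of the extracted profile.

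Applying this iteratively with $r_n^{(0)}=f_{\phi(n)}$ and $r_n^{(l+1)}=r_n^{(l)}-U^{(l+1)}(\cdot-x_n^{(l+1)})$, and passing to a diagonal subsequence, I would verify three bookkeeping facts by induction on $l$. (i) The pairwise orthogonality $|x_n^{(j)}-x_n^{(k)}|\to+\infty$: otherwise a bounded difference would let a later profile be read off as a translate of an earlier remainder whose weak limit is $0$, contradicting $U^{(j)}\neq0$. (ii) Preservation of $r_n^{(l)}(\cdot+x_n^{(k)})\rightharpoonup0$ for every $k\leq l$, which follows from (i) since the subtracted terms $U^{(j)}(\cdot+x_n^{(k)}-x_n^{(j)})$ escape to spatial infinity. (iii) The near-orthogonality $(\ref{near orthogonality dot(H) s})$: each new piece converges weakly to $0$ after recentering and the centers separate, so all cross terms in the Hilbert-space expansion of $\||D|^s f_{\phi(n)}\|_{L^2}^2$ vanish in the limit for every $s\in[0,1]$, using that $|D|^{2s}U\in H^{1-2s}\subset H^{-1}=(H^1)'$ kills the pairing against the weakly null factors. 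Taking $s=0$ and $s=1$ and summing over $j$ gives $\sum_j\|U^{(j)}\|_{H^1}^2\leq\limsup_n\|f_{\phi(n)}\|_{H^1}^2<\infty$, hence $\|U^{(j)}\|_{H^1}\to0$; by Agmon this forces $\eta(\{r_n^{(l)}\})\leq 2\|U^{(l+1)}\|_{L^\infty}\to0$ as $l\to\infty$. The elementary estimate of the second paragraph then yields $\limsup_n\|r_n^{(l)}\|_{L^\infty}\to0$, and interpolation completes the $L^p$-claim.

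The main obstacle is not any single estimate but closing the feedback loop that makes the iteration terminate in the right sense: one must simultaneously maintain, under a single diagonal subsequence, the weak-null property of the remainder at all previous centers and the divergence of the centers, and then convert the $\ell^2$-summability of the profile $H^1$-norms (a consequence of $(\ref{near orthogonality dot(H) s})$) back into $L^\infty$-vanishing of the remainder through the extraction quantity $\eta$. Defining $\eta$ via the $L^\infty$-size of weak limits is precisely what lets Agmon's inequality bridge the $H^1$-orthogonality and the $L^\infty$-decay; the remaining ingredients are the routine diagonalization and the translation-invariance of $\||D|^s\cdot\|_{L^2}$.
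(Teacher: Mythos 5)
The paper itself gives no proof of this theorem: it is imported by citation from G\'erard $[\ref{gerard profile decomposition}]$ and Hmidi--Keraani $[\ref{Hmidi--Keraani Blowup theory profile decomposition}]$, so the only comparison possible is with the argument of those references, which is exactly the one you reconstruct. Your proof is correct and is essentially that standard argument: translation-only extraction driven by the quantity $\eta$, the compact embedding $H^1([-M,M])\hookrightarrow C([-M,M])$ to show $\limsup_n\|g_n\|_{L^\infty}\leq\eta(\{g_n\})$, vanishing of cross terms via weak convergence of diverging translates to obtain $(\ref{near orthogonality dot(H) s})$ for all $s\in[0,1]$, $\ell^2$-summability of $\|U^{(j)}\|_{H^1}^2$ converted back into $L^\infty$-decay of the remainders through Agmon's inequality, and interpolation for $2<p\leq+\infty$; you also correctly supply the one point specific to this setting, namely that $H^1_+$ is a closed, translation-invariant subspace, hence weakly closed, so all profiles and remainders stay in the Hardy space. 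Note in addition that your greedy (near-maximal) choice of each profile automatically yields the property stated in remark~$\ref{U 1 =0 then every U is 0}$, which the paper uses in the proof of theorem~$\ref{existence of minimizer of I m gamma general}$.
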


\begin{rem}\label{U 1 =0 then every U is 0}
According to G\'erard $[\ref{gerard profile decomposition}]$ and Hmidi--Keraani $[\ref{Hmidi--Keraani Blowup theory profile decomposition}]$, we may construct the profiles $(U^{(j)})_{j\in \mathbb{N}_+}\subset H^1_+$ such that if $U^{(l)}=0$ for some $l\in \mathbb{N}$, then $U^{(j)}=0$, for every $j\geq l$.
\end{rem}

\noindent Then we shall use this theorem to establish the existence of minimizers in $H^1_+$ of $I_m^{(\gamma)}$, for every $m\geq 2$ and $\gamma\geq 0$. Similar applications may be found in Hmidi--Keraani $[\ref{Hmidi--Keraani application}]$ Pocovnicu $[\ref{pocovnicu Traveling waves for the cubic Szego eq}]$ and Gassot $[\ref{Gassot radially symmetric traveling waves}]$.

\begin{proof}[Proof of theorem $\ref{existence of minimizer of I m gamma general}$]
Since $\sup_{n\in \mathbb{N}}\|f_n\|_{H^1} <+\infty$, theorem $\ref{Profile decomposition H1+}$ gives a subsequence of $(f_n)_{n\in \mathbb{N}_+}$, denoted by $(f_{\phi(n)})_{n\in \mathbb{N}_+}$, a sequence of profiles $(U^{(j)})_{j\in \mathbb{N}_+}\subset H^1_+$ and a double-indexed sequence $(x_n^{(j)})_{n,j\in \mathbb{N}_+}\subset \mathbb{R}$ such that $|x_n^{(j)}-x_n^{(k)}| \to +\infty$, if $j\ne k$, $(\ref{profile decomposition formula})$ and $(\ref{near orthogonality dot(H) s})$ hold and $\lim_{l \to +\infty}\limsup_{n\to+\infty}\|r_n^{(l)}\|_{L^{2m+2}} \to 0$.\\

\noindent For all $0\leq s \leq 1$, $l\in \mathbb{N}_+$ and $\delta >0$, there exists $N=N(s,l,\delta) \in \mathbb{N}_+$ such that
\begin{equation*}
\||D|^{s}f_{\phi(n)}\|_{L^2}^2 \geq \sum_{j=1}^l \||D|^{s} U^{(j)}\|_{L^2}^2 -\delta, \qquad \forall n >N.
\end{equation*}Taking $n\to +\infty$, $\delta\to 0$ and $l\to +\infty$, we have $\sum_{j=1}^l \||D|^{s} U^{(j)}\|_{L^2}^2 \leq \liminf_{n\to+\infty}\||D|^{s} f_{\phi(n)}\|_{L^2}^2$, for every $0\leq s \leq 1$. Then, there exists a subsequence of $(f_{\phi(n)})_{n\in \mathbb{N}}$, denoted by $(f_{\phi \circ \tilde{\phi}(n)})_{n\in \mathbb{N}}$ such that both sequences $(\||D|^{\frac{1}{2}}f_{\phi \circ \tilde{\phi}(n)}\|_{L^2})_{n\in \mathbb{N}}$ and $(\|\partial_x f_{\phi \circ \tilde{\phi}(n)}\|_{L^2})_{n\in \mathbb{N}}$ converge and we have
\begin{equation}\label{estimate of sum of profiles}
\begin{cases}
&\sum_{j=1}^{+\infty}\|\partial_x U^{(j)}\|_{L^2}^2 \leq \lim_{n\to +\infty}\|\partial_x f_{\phi \circ \tilde{\phi}(n)}\|_{L^2}^2,\\ 
&\sum_{j=1}^{+\infty}\||D|^{\frac{1}{2}} U^{(j)}\|_{L^2}^2 \leq \lim_{n\to +\infty}\||D|^{\frac{1}{2}}  f_{\phi \circ \tilde{\phi}(n)}\|_{L^2}^2,\\
&\sum_{j=1}^{+\infty}\|U^{(j)}\|_{L^2}^2 \leq \lim_{n\to +\infty}\| f_{\phi(n)}\|_{L^2}^2 = 1.
\end{cases}
\end{equation}Thus $0\leq \|U^{(j)}\|_{L^2}\leq 1$, for every $j\in \mathbb{N}_+$. We set $\psi=\phi \circ \tilde{\phi} : \mathbb{N \to \mathbb{N}}$. Since $m\geq 2$ and
\begin{equation*}
J_m^{(\gamma)}= \lim_{n\to +\infty}I_m^{(\gamma)}(f_{\psi(n)}) =\lim_{n\to +\infty}\|\partial_x f_{\psi(n)}\|_{L^2}^m + \gamma \lim_{n\to +\infty}\||D|^{\frac{1}{2}}  f_{\psi(n)}\|_{L^2}^{2m},
\end{equation*}estimates $(\ref{estimate of sum of profiles})$ yields that \begin{equation}\label{J_m bigger than Jm times sum}
\begin{split}
J_m^{(\gamma)} \geq &(\sum_{j=1}^{+\infty}\|\partial_x U^{(j)}\|_{L^2}^2)^{\frac{m}{2}}+ \gamma (\sum_{j=1}^{+\infty}\||D|^{\frac{1}{2}} U^{(j)}\|_{L^2}^2)^{m}\\
 \geq & \sum_{j=1}^{+\infty}(\|\partial_x U^{(j)}\|_{L^2}^m \|U^{(j)}\|_{L^2}^{m+2} + \gamma \||D|^{\frac{1}{2}} U^{(j)}\|_{L^2}^{2m} \|U^{(j)}\|_{L^2}^{2})\\
 \geq & J_m^{(\gamma)} \sum_{j=1}^{+\infty}\|U^{(j)}\|_{L^{2m+2}}^{2m+2}.
\end{split}
\end{equation}We claim that 
\begin{equation}\label{sum of L(2m+2) norm =1}
\sum_{j=1}^{+\infty}\| U^{(j)}\|_{L^{2m+2}}^{2m+2}=1.
\end{equation}In fact, each profile $U^{(j)} \in L^{\infty}(\mathbb{R})$. More precisely, we have $\sum_{j=1}^{\infty} \| U^{(j)}\|_{L^{\infty}}^2 \lesssim_m 1$ by Sobolev embedding $H^1 \hookrightarrow L^{\infty}$ and estimate $(\ref{estimate of sum of profiles})$. One can easily check that 
\begin{equation*}
\|\sum_{j=1}^{l}U^{(j)}\|_{L^{2m+2}}^{2m+2} = \sum_{j=1}^{l}\|U^{(j)}\|_{L^{2m+2}}^{2m+2} +R_n^{(l)}, \qquad \forall l \in \mathbb{N}_+,
\end{equation*}where $|R_n^{(l)}|\lesssim_{m,l} \sum_{1\leq j < k\leq l} \int_{\mathbb{R}}|U^{(j)}(x-x_n^{(j)})||U^{(k)}(x-x_n^{(k)})|\mathrm{d}x$. Since $U^{(j)} \in L^2_+$, $\lim_{n\to +\infty}|x_n^{(j)}-x_n^{(k)}| = + \infty$, for all $1 \leq j <k \leq l$, we have $|U^{(j)}(\cdot -x_n^{(j)}+x_n^{(k)})|\rightharpoonup 0$ in $L^2_+$, as $n\to +\infty$. So $\lim_{n\to+\infty}R_n^{(l)} =0$. The profile decomposition theorem $(\ref{profile decomposition formula})$ implies that 
\begin{equation*}
\|r_n^{(l)}\|_{L^{2m+2}} \geq \Big| \|f_{\psi(n)}\|_{L^{2m+2}} - \|\sum_{j=1}^{l}U^{(j)}\|_{L^{2m+2}}  \Big| = \Big| 1 - (\sum_{j=1}^{l}\|U^{(j)}\|_{L^{2m+2}}^{2m+2} +R_n^{(l)})^{\frac{1}{2m+2}} \Big|.
\end{equation*}Taking $n \to +\infty$, we have $\Big| 1 - (\sum_{j=1}^{l}\|U^{(j)}\|_{L^{2m+2}}^{2m+2} )^{\frac{1}{2m+2}} \Big| \leq  \limsup_{n \to +\infty}\|r_n^{(l)}\|_{L^{2m+2}} \to 0$, as $l \to +\infty$ and we obtain $(\ref{sum of L(2m+2) norm =1})$.\\

\noindent Combining $(\ref{J_m bigger than Jm times sum})$, we have $J_m^{(\gamma)} \geq J_m^{(\gamma)} \sum_{j=1}^{+\infty}\| U^{(j)}\|_{L^{2m+2}}^{2m+2} =J_m^{(\gamma)}$. All inequalities in $(\ref{estimate of sum of profiles})$ and $(\ref{J_m bigger than Jm times sum})$ are actually equalities. In particular, we have 
\begin{equation*}
\|\partial_x U^{(1)}\|_{L^2}^m \|U^{(1)}\|_{L^2}^{2m+2}= \|\partial_x U^{(1)}\|_{L^2}^m , \qquad I_m^{(\gamma)}(U^{(1)})\|U^{(1)}\|_{L^{2m+2}}^{2m+2} =J_m^{(\gamma)}\|U^{(1)}\|_{L^{2m+2}}^{2m+2}.
\end{equation*}If $U^{(1)} \equiv 0$ a.e. in $\mathbb{R}$, then so is $U^{(j)}$, for every $j\geq 2$ by the construction of profiles in remark $\ref{U 1 =0 then every U is 0}$. It contradicts formula $(\ref{sum of L(2m+2) norm =1})$. Thus Gagliardo--Nirenberg inequality yields that $\|\partial_x U^{(1)}\|_{L^2} >0$ and $\|U^{(1)}\|_{L^2}=1$. Since $\sum_{j=1}^{+\infty} \|U^{(j)}\|_{L^2}^{2} \leq 1$, we have $U^{(j)}=0$ a.e., for every $j\geq 2$. Formula $(\ref{sum of L(2m+2) norm =1})$ implies that $\|U^{(1)}\|_{L^{2m+2}}=1$ and $I_m^{(\gamma)}(U^{(1)}) =J_m^{(\gamma)}$.\\

\noindent Estimate $(\ref{estimate of sum of profiles})$ is also equality, so $ \|\partial_x U^{(1)}\|_{L^2}^2 = \lim_{n\to+\infty} \|\partial_x f_{\psi(n)}\|_{L^2}^2$.  We set
\begin{equation*}
\epsilon_{n,s}^{(l)}:=\||D|^{s}f_{\psi(n)}\|_{L^2}^2-\sum_{j=1}^l \| |D|^{s} V^{(j)}\|_{L^2}^2 -\||D|^{s} r_{\tilde{\phi}(n)}^{(l)}\|_{L^2}^2, \qquad \forall n \in \mathbb{N}, \quad l\in \mathbb{N}_+, \quad 0\leq s\leq 1.
\end{equation*}Thus $\lim_{n\to+\infty}\epsilon_{n,s}^{(l)} =0$, for all $l\in \mathbb{N}_+$, $0\leq s\leq 1$. Then
\begin{equation*}
\|f_{\psi(n)} - U^{(1)}(\cdot - y_n^{(1)})\|_{H^1}^2 = \|r_{\tilde{\phi}(n)}^{(1)}\|_{H^1}^2 =  \|\partial_x f_{\psi(n)}\|_{L^2}^2 -\|\partial_x U^{(1)}\|_{L^2}^2 -\epsilon_{n,0}^{(1)}-\epsilon_{n,1}^{(1)}  \to 0,
\end{equation*}as $n\to +\infty$.
\end{proof}

\bigskip
\bigskip

\section{Orbital stability of traveling wave $u(t,x)=e^{i\omega t}Q(x+ct)$}\label{Traveling wave of ground state section}
\noindent At first, we prove the $H^1-$orbital stability with scaling for traveling wave $u(t,x)=e^{i\omega t}Q(x+ct)$ of the mass-critical focusing NLS-Szeg\H{o} equation,
\begin{equation*}
i\partial_t u + \partial_x^2 u =- \Pi(|u|^4 u) , \quad (t,x)\in \mathbb{R}\times \mathbb{R}, \qquad u(0, \cdot)=u_0,
\end{equation*}with $\omega,c>0$, $\gamma\geq 0$ and $Q \in G_2^{(\gamma)}(\sqrt[4]{3J_2^{(\gamma)}}, \|Q\|_{L^6})$. Then we focus on the case $\gamma=2$ by classifying all ground states and refining theorem $\ref{Orbital stability of ground state of I m=2 gamma positif}$ as theorem $\ref{Orbital stability of ground state of I m=2 gamma=2}$.

\bigskip

\subsection{Proof of theorem $\ref{Orbital stability of ground state of I m=2 gamma positif}$}
\begin{proof} 
Fix $b>0$ and $\gamma\geq 0$, for every $n \in \mathbb{N}$, we choose $u_0^n\in H^1_+$ and $\varphi^n \in G_2^{(\gamma)}(\sqrt[4]{3J_2^{(\gamma)}},  b)$ such that $\|u_0^n-\varphi^n\|_{H^1} \to 0$, as $n \to +\infty$. Let $u^n$ solve $(\ref{NLS-Szego quintic R})$ with initial datum $u^n(0)=u^n_0$. We shall prove that
\begin{equation}\label{goal of orbital stability alpha =0}
\inf_{\Psi \in \bigcup_{C(\gamma)^{-1}\leq \theta \leq C(\gamma)} G_2^{(\gamma)}(\sqrt[4]{3J_2^{(\gamma)}}, \theta b)}\|u^n(t^n) - \Psi\|_{H^1}\to 0,  \qquad \mathrm{as} \qquad n \to +\infty,
\end{equation}up to a subsequence, for every temporal sequence $(t^n)_{n\in \mathbb{N}} \subset \mathbb{R}$. We use the three conservation laws  
\begin{equation*}
E(u)=\frac{\|\partial_x u\|_{L^2}^2 }{2}-\frac{\|u\|_{L^6}^6}{6}, \qquad P(u)=\langle D u, u \rangle_{L^2} = \||D|^{\frac{1}{2}} u\|_{L^2}^2, \qquad M(u)=\|u\|_{L^2}^2.
\end{equation*}to construct another conservation law
\begin{equation}\label{new conservation law K gamma}
K_{\gamma}(u):=E(u)+ \frac{\gamma P(u)^2}{M(u)}=\frac{\|\partial_x u\|_{L^2}^2 }{2}-\frac{\|u\|_{L^6}^6}{6} + \frac{\gamma \||D|^{\frac{1}{2}} u\|_{L^2}^4}{2 \|u\|_{L^2}^2} = \frac{\|u\|_{L^6}^6}{6\|u\|_{L^2}^4}(3 I_2^{(\gamma)}(u)-\|u\|_{L^2}^4).
\end{equation}Since $I_2^{(\gamma)}(\varphi^n)=J_2^{(\gamma)}$ and $\|u_0^n-\varphi^n\|_{H^1} \to 0$, as $n \to +\infty$, we have $\lim_{n \to +\infty}\|u_0^n\|_{L^2}^4 =3J_2^{(\gamma)}$ and $\lim_{n \to +\infty}I_2^{(\gamma)}(u_0^n)= J_2^{(\gamma)}$. Thus
\begin{equation}\label{difference limit}
  K_{\gamma}(u^n(t^n))= K_{\gamma}(u_0^n) \to 0, \qquad  \mathrm{as}\qquad n \to +\infty.
\end{equation}In order to construct a minimizing sequence of $I_2^{(\gamma)}$, we need to prove the following inequalities
\begin{equation}\label{estimate of liminf limsup of L6 norm of un(tn)}
C(\gamma)^{-1} b \leq \liminf_{n\to +\infty}\|u^n(t^n)\|_{L^6}\leq \limsup_{n\to +\infty}\|u^n(t^n)\|_{L^6} \leq C(\gamma)b.
\end{equation}In fact, we denote by $C_1^6:= \tfrac{1+\gamma}{J_2^{(\gamma)}} C(\gamma)^6 $, then Sobolev embedding $\|f\|_{L^6} \leq C_1 \||D|^{\frac{1}{3}} f\|_{L^2}$ yields that
\begin{equation*}
\frac{\|\partial_x u^n(t^n)\|_{L^2}^2}{2}+  \frac{\gamma\||D|^{\frac{1}{2}} u^n(t^n)\|_{L^2}^4}{2\|u^n(t^n)\|_{L^2}^2} \geq  \frac{(1+\gamma)\||D|^{\frac{1}{2}} u^n_0\|_{L^2}^4}{2\|u^n_0\|_{L^2}^2} \geq \frac{(1+\gamma)\||D|^{\frac{1}{3}} u^n_0\|_{L^2}^6}{2\|u^n_0\|_{L^2}^4} \geq \frac{(1+\gamma)\|  u^n_0\|_{L^6}^6}{2 C_1^6\|u^n_0\|_{L^2}^4}.
\end{equation*}Thus $\|u^n(t^n)\|_{L^6} \geq \sqrt[6]{\frac{3(1+\gamma)\|  u^n_0\|_{L^6}^6}{ C_1^6\|u^n_0\|_{L^2}^4} - 6 K_{\gamma}(u^n(t^n))} \to \frac{b}{C(\gamma)}$, as $n\to +\infty$. Similarly, we have
\begin{equation*}
\|u^n(t^n)\|_{L^6} \leq C_1 \||D|^{\frac{1}{3}}u^n(t^n)\|_{L^2} \leq C_1 \||D|^{\frac{1}{2}}u^n(t^n)\|_{L^2}^{\frac{2}{3}}\| u^n(t^n)\|_{L^2}^{\frac{1}{3}}=C_1 \||D|^{\frac{1}{2}}u^n_0\|_{L^2}^{\frac{2}{3}}\| u^n_0\|_{L^2}^{\frac{1}{3}},
\end{equation*}and $\|u^n(t^n)\|_{L^6} \leq C_1\|\partial_x u^n_0\|_{L^2}^{\frac{1}{3}}\| u^n_0\|_{L^2}^{\frac{2}{3}}$. Thus we have $\|u^n(t^n)\|_{L^6} \leq C_1\|u^n_0\|_{L^6} \sqrt[6]{\frac{I_2^{(\gamma)}(u^n_0)}{1+\gamma}}\to C(\gamma) b$, as $n\to +\infty$. Thus estimates $(\ref{estimate of liminf limsup of L6 norm of un(tn)})$ are proved and we have $\lim_{n\to +\infty} I_2^{(\gamma)}(u^n(t^n)) =J_{2}^{(\gamma)}$.\\

\noindent Rescaling $v_n(x):=\lambda_n u^n(t^n, \mu_nx)$ such that $\|v_n\|_{L^2}=\|v_n\|_{L^6}=1$, with $\lambda_n, \mu_n >0$. Then 
\begin{equation}
\sqrt{\mu_n} \lambda_n^{-1} = \|u_0^n\|_{L^2}, \qquad \sqrt[6]{\mu_n} \lambda_n^{-1} = \|u^n(t^n)\|_{L^6}, \qquad \lim_{n\to +\infty} I_2^{(\gamma)}(v_n) =J_{2}^{(\gamma)}
\end{equation}

\begin{equation*}
\|\partial_x v_n\|_{L^2}^2= I(v_n)=I(u^n(t^n))=\frac{\|\partial_x u^n(t^n)\|_{L^2}^2}{\|u^n(t^n)\|_{L^6}^6} \| u^n_0\|_{L^2}^4 \to J, \qquad \mathrm{as} \qquad n \to +\infty.
\end{equation*}Theorem $\ref{existence of minimizer of I m gamma general}$ yields that there exists a profile $U\in G_2^{(\gamma)}(1,1)$ and a sequence of real numbers $(x_n)_{n\in \mathbb{N}}$ such that $\|v_n-U(\cdot -x_n)\|_{H^1} \to 0$, as $n \to +\infty$ up to a subsequence, still denoted by $(v_n)_{n\in \mathbb{N}_+}$. Moreover, we assume that $\|u^n(t^n)\|_{L^6} \to \theta b$, as $n \to +\infty$ in the same subsequence. Then $(\ref{estimate of liminf limsup of L6 norm of un(tn)})$ yields that $C^{-1}\leq \theta \leq C$. We denote by $\lambda_{\infty}=\lim_{n \to +\infty}\lambda_n$ and $\mu_{\infty}=\lim_{n \to +\infty}\mu_n$. Then we have
\begin{equation}\label{lambda + infinity mu infinity valor}
\lambda_{\infty} >0, \qquad \mu_{\infty} >0, \qquad \sqrt{\mu_{\infty}} \lambda_{\infty}^{-1}=\sqrt[4]{3J_2^{(\gamma)}}\qquad \mathrm{and} \qquad \sqrt[6]{\mu_{\infty}} \lambda_{\infty}^{-1} =\theta b.
\end{equation}Then
\begin{equation}\label{auxilary for estimate of difference between V and un tn}
\lim_{n\to \infty}\|\lambda_n u^n(t^n, \mu_n \cdot)-U(\cdot - x_n)\|_{H^1}=0 \quad \Longleftrightarrow \quad\lim_{n\to \infty}\|u^n(t^n)- \frac{1}{\lambda_n} U(\frac{\cdot-\mu_n x_n}{\mu_n})\|_{H^1} =0.
\end{equation}Since $U\in H^1_+$, we have $\lim_{n\to \infty}\|\frac{1}{\lambda_n}U(\frac{\cdot}{\mu_n})-\frac{1}{\lambda_{\infty}}U(\frac{\cdot}{\mu_{\infty}})\|_{H^1}= 0$. Together with $(\ref{auxilary for estimate of difference between V and un tn})$, we have
\begin{equation}
\|u^n(t^n) - \Psi(\cdot-\mu_n x_n)\|_{H^1}\to 0, \qquad \mathrm{as} \qquad n \to +\infty,
\end{equation}where $\Psi(x):= \frac{1}{\lambda_{\infty}}U(\frac{x}{\mu_{\infty}})$. Since $\|U\|_{L^2}=\|U\|_{L^6}=1$, we have $\Psi  \in G_2^{(\gamma)}(\sqrt[4]{3J_2^{(\gamma)}}, \theta b)$ by $(\ref{lambda + infinity mu infinity valor})$, leading to $(\ref{goal of orbital stability alpha =0})$ up to a subsequence. 
\end{proof}

\bigskip

\subsection{The special case $\gamma=2$}
\noindent We prove proposition $\ref{Cauchy Schwarz estimate to find minimizer}$ in order to classify all the ground states of the functional
\begin{equation*}
I_2^{(2)}=\frac{\|\partial_x f\|_{L^2}^2\|f\|_{L^2}^4+2\||D|^{\frac{1}{2}}f\|_{L^2}^4\|f\|_{L^2}^2}{\|f\|_{L^6}^6}, \qquad \forall f\in H^1_+ \backslash \{0\}.
\end{equation*}as $G_2^{(2)} = \{x\mapsto\frac{\lambda e^{i\theta}}{\mu x+ y +i} \in H^1_+: \forall \lambda, \mu >0 \quad \forall \theta, y \in \mathbb{R}\}$. The idea of using Cauchy--Schwarz inequality to classify ground states follows from Foschi $[\ref{Foschi, Maximizers for the Strichartz Inequality}]$ for linear Schr\"odinger equation and linear wave equation on $\mathbb{R}^d$, for $d \geq 1$, G\'erard--Grellier $[\ref{gerardgrellier1}, \ref{Gerard grellier book cubic szego equation and hankel operators}]$ for the cubic Szeg\H{o} equation on the torus $\mathbb{S}^1$, Pocovnicu $[\ref{pocovnicu Traveling waves for the cubic Szego eq}]$ for the cubic Szeg\H{o} equation on the line $\mathbb{R}$ and Gassot $[\ref{Gassot radially symmetric traveling waves}]$ for the non linear Schr\"odinger equation on the Heisenberg group.\\

\begin{proof}[Proof of proposition $\ref{Cauchy Schwarz estimate to find minimizer}$]
Plancherel formula gives that $\|f\|_{L^6}^6 = \frac{1}{32\pi^5}\int_{\xi >0} |(\hat{f}*\hat{f}*\hat{f})(\xi)|^2 \mathrm{d}\xi$. By using Cauchy--Schwarz inequality, for every $\xi >0$, we have
\begin{equation*}
\begin{split}
|(\hat{f}*\hat{f}*\hat{f})(\xi)|^2 = & \Big|\int_{\eta_1 >0, \eta_2>0, \eta_1+\eta_2\leq \xi}\hat{f}(\eta_1)\hat{f}(\eta_2)\hat{f}(\xi-\eta_1-\eta_2)\mathrm{d}\eta_1 \mathrm{d}\eta_2\Big|^2\\
\leq & \frac{\xi^2}{2}|\int_{\eta_1 >0, \eta_2>0, \eta_1+\eta_2\leq \xi}\Big|\hat{f}(\eta_1)\hat{f}(\eta_2)\hat{f}(\xi-\eta_1-\eta_2)\Big|^2\mathrm{d}\eta_1 \mathrm{d}\eta_2.\\
\end{split}
\end{equation*}Thus we have
\begin{equation*}
\begin{split}
\|f\|_{L^6}^6 \leq &\frac{1}{64\pi^5} \int_{\eta_1 >0, \eta_2>0, \eta_3>0}(\eta_1+\eta_2+\eta_3)^2\Big|\hat{f}(\eta_1)\hat{f}(\eta_2)\hat{f}(\eta_3)\Big|^2\mathrm{d}\eta_1 \mathrm{d}\eta_2 \mathrm{d}\eta_3\\
= & \frac{3}{8\pi^2}(\|\partial_x f\|_{L^2}^2 \| f\|_{L^2}^4 + 2 \||D|^{\frac{1}{2}} f\|_{L^2}^4\| f\|_{L^2}^2).
\end{split}
\end{equation*}If $I_2^{(2)}(f)=\frac{8 \pi^2}{3}$, then $\hat{f}(\eta_1)\hat{f}(\eta_2)\hat{f}(\eta_3)=\hat{f}(0)^2 \hat{f}(\eta_1+\eta_2+\eta_3)$, for all $\eta_1, \eta_2,\eta_3 >0$. Since $f\in H^1_+\backslash \{0\}$, we have $\hat{f}(0) \ne 0$. Thus 
\begin{equation*}
\hat{f}(\eta_1)\hat{f}(\eta_2)=\hat{f}(\eta_1+\eta_2)\hat{f}(0), \qquad \forall \eta_1, \eta_2 \geq 0.
\end{equation*}This is true if and only if $\hat{f}(\eta)=e^{-ip \eta} \hat{f}(0)$, for some $p \in \mathbb{C}$ such that $\mathrm{Im}p<0$. Thus we have $f(x)=\frac{\mathcal{A}}{x-p}$, for some $\mathcal{A} \in \mathbb{C}$. We conclude by $I_2^{(2)}(f)=I_2^{(2)}(f_{\lambda,  \mu, \theta, y})$, with $f_{\lambda,  \mu, \theta, y}(x)=\lambda e^{i\theta} f(\mu x+y)$.
\end{proof}

 \bigskip

\noindent For every $c>0$, we prove the orbital stability of traveling wave $u(t,x)=e^{\frac{3c^2 it}{8}}Q_c^*(x+ct)$ of equation $(\ref{NLS-Szego quintic R})$, with $Q_c^*(x)= \frac{2\sqrt[4]{2c^2}}{cx+2i}$. Proposition $\ref{Cauchy Schwarz estimate to find minimizer}$ yields that $G_2^{(2)}(\sqrt[4]{8\pi^2}, \sqrt[6]{\frac{3\pi c^2}{\sqrt{2}}})=\{x\mapsto \frac{2\sqrt[4]{2c^2}e^{i\theta}}{cx+2i+y} \in H^1_+ : \forall \theta, y \in \mathbb{R}\}$. If $Q_c \in G_2^{(2)}(\sqrt[4]{8\pi^2}, \sqrt[6]{\frac{3\pi c^2}{\sqrt{2}}})$, then we have 
\begin{equation}\label{norm classification of G 2 2}
\|Q_c\|_{L^2}^4=8\pi^2, \qquad \|\partial_x Q_c\|_{L^2}^2 =\frac{\pi c^2}{2 \sqrt{2}}, \qquad \||D|^{\frac{1}{2}}Q_c\|_{L^2}^2 = \frac{\pi c}{\sqrt{2}},\qquad \|Q_c\|_{L^6}^6=\frac{3\pi c^2}{\sqrt{2}}
\end{equation}

\begin{proof}[Proof of theorem $\ref{Orbital stability of ground state of I m=2 gamma=2}$]
Fix $c>0$, for every $n \in \mathbb{N}$, we choose $u_0^n\in H^1_+$ and $Q_c^n \in G_2^{(2)}(\sqrt[4]{8\pi^2}, \sqrt[6]{\frac{3\pi c^2}{\sqrt{2}}})$ such that $\|u_0^n-Q_c^n\|_{H^1} \to 0$, as $n \to +\infty$. Let $u^n$ solve $(\ref{NLS-Szego quintic R})$ with initial datum $u^n(0)=u^n_0$. We shall prove that
\begin{equation}\label{goal of orbital stability alpha =0 tilde}
\inf_{\Psi \in G_2^{(2)}(\sqrt[4]{8\pi^2}, \sqrt[6]{\frac{3\pi c^2}{\sqrt{2}}})}\|u^n(t^n) - \Psi\|_{H^1} \to 0,  \qquad \mathrm{as} \qquad n \to +\infty,
\end{equation}up to a subsequence, for every temporal sequence $(t_n)_{n\in \mathbb{N}} \subset \mathbb{R}$. We use the same procedure as the proof of theorem $\ref{Orbital stability of ground state of I m=2 gamma positif}$ to obtain that $(u^n(t^n))_{n\in \mathbb{N}}$ is a minimizing sequence of $I_2^{(2)}$. We set $v_n(x):=\lambda_n u^n(t^n, \mu_nx)$ such that $\|v_n\|_{L^2}=\|v_n\|_{L^6}=1$, with $\lambda_n, \mu_n >0$ and $\lim_{n \to +\infty}I_2^{(2)}(v_n)=\lim_{n \to +\infty}I_2^{(2)}(u^n(t^n))=J_2^{(2)}$.
\begin{equation*}
\tilde{I}(v_n)=\tilde{I}(u^n(t^n))\to 1, \qquad \mathrm{as} \qquad n \to +\infty.
\end{equation*}Theorem $\ref{existence of minimizer of I m gamma general}$ yields that there exists a profile $V\in G_2^{(2)}(1,1)$ and a sequence of real numbers $(y_n)_{n\in \mathbb{N}_+}$ such that $\|v_n-V(\cdot -y_n)\|_{H^1} \to 0$, as $n \to +\infty$ up to a subsequence, still denoted by $(v_n)_{n\in \mathbb{N}_+}$. Similarly, we have $(\ref{auxilary for estimate of difference between V and un tn})$ for $V$.
\begin{equation}\label{auxilary for estimate of difference between V and un tn  tilde}
\lim_{n\to \infty}\|\lambda_n u^n(t^n, \mu_n \cdot)-V(\cdot - y_n)\|_{H^1}=0 \quad \Longleftrightarrow \quad\lim_{n\to \infty}\|u^n(t^n)- \frac{1}{\lambda_n} V(\frac{\cdot-\mu_n y_n}{\mu_n})\|_{H^1} =0.
\end{equation}Since all the ground states are completely classified by proposition $\ref{Cauchy Schwarz estimate to find minimizer}$, we obtain the values of $\lambda_{\infty}$ and $\mu_{\infty}$ from $(\ref{norm classification of G 2 2})$ and the conservation law $P(u)=\||D|^{\frac{1}{2}} u\|_{L^2}^2$. Since $\lambda_n^2 \||D|^{\frac{1}{2}} \frac{1}{\lambda_n} V(\frac{\cdot-\mu_n y_n}{\mu_n})\|_{L^2}^2 =\sqrt{\frac{2}{3}}\pi$, $\||D|^{\frac{1}{2}} u^n(t^n)\|_{L^2}^2=\||D|^{\frac{1}{2}} u^n_0\|_{L^2}^2 \to \frac{\pi c}{\sqrt{2}}$ and
\begin{equation*}
\Big|\||D|^{\frac{1}{2}} u^n(t^n)\|_{L^2}-\||D|^{\frac{1}{2}} \frac{1}{\lambda_n} V(\frac{\cdot-\mu_n y_n}{\mu_n})\|_{L^2}\Big| \to 0, \qquad \mathrm{as}\qquad n\to +\infty,
\end{equation*}we have $\lambda_{\infty}^2:=\lim_{n \to +\infty}\lambda_n^2 = \frac{2}{\sqrt{3}c}$ and $\mu_{\infty}:=\lim_{n \to +\infty}\mu_n = \sqrt{8 \pi^2}\lim_{n \to +\infty}\lambda_n^2 =\frac{4\sqrt{2}\pi}{\sqrt{3}c}$. Together with $(\ref{auxilary for estimate of difference between V and un tn tilde})$, we have
\begin{equation*}
\|u^n(t^n) - \tilde{\Psi}(\cdot-\mu_n y_n)\|_{H^1}\to 0, \qquad \mathrm{as} \qquad n \to +\infty,
\end{equation*}where $\tilde{\Psi}(x):= \frac{1}{\lambda_{\infty}}V(\frac{x}{\mu_{\infty}})$. We have $\tilde{\Psi}  \in G_2^{(2)}(\sqrt[4]{8\pi^2}, \sqrt[6]{\frac{3\pi c^2}{\sqrt{2}}})$, leading to $(\ref{goal of orbital stability alpha =0 tilde})$ up to a subsequence. 
\end{proof}

\bigskip
\bigskip

\section{Appendices}
\noindent In the first appendix, we prove that if a $H^1-$solution of equation $(\ref{NLS-Szego quintic R})$ $L^2-$scatters, then it also $H^1-$scatters. Then we discuss the problem of uniqueness of ground states of the functional $I_2^{(\gamma)}$, for general $\gamma \geq 0$.
\subsection{Persistence of regularity for scattering}
\noindent The persistence of regularity for scattering can be established by Strichartz estimates and a bootstrap argument.
\begin{prop}\label{persistence of regularity of scattering}
If $u_0 \in H^1_+$ and there exists $u_+\in L^2_+$ such that $\lim_{t\to+\infty}\|u(t)-e^{it\partial_x^2} u_+\|_{L^2}=0$, where $u$ is the unique solution of $(\ref{NLS-Szego quintic R})$, then we have $u_+=u_0 + i \int_0^{+\infty}e^{-i\tau \partial_x^2} \Pi(|u(\tau)|^4 u(\tau)) \mathrm{d}\tau \in H^1_+$ and $\lim_{t\to+\infty}\|u(t)-e^{it\partial_x^2} u_+\|_{H^1}=0$.
\end{prop}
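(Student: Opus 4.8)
The plan is to run a standard \emph{persistence of regularity} argument based on Strichartz estimates and a bootstrap on the Duhamel term, exploiting that $\Pi$ commutes with both $e^{it\partial_x^2}$ and $\partial_x$ and is bounded on every $L^p(\mathbb{R})$, $1<p<\infty$. Throughout I would use the one-dimensional Schr\"odinger-admissible pair $(6,6)$ (indeed $\tfrac{2}{6}+\tfrac16=\tfrac12$) together with its dual $(6/5,6/5)$, and the mass-critical structure of the quintic nonlinearity. Duhamel's formula reads
\[
e^{-it\partial_x^2}u(t) = u_0 + i\int_0^t e^{-i\tau\partial_x^2}\Pi(|u|^4 u)(\tau)\,\mathrm{d}\tau,
\]
so the hypothesis $\|e^{-it\partial_x^2}u(t)-u_+\|_{L^2}\to 0$ already identifies $u_+$ with the claimed formula, \emph{provided} the improper integral converges; the substantive content is therefore the $H^1$ statement.

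First I would establish that the global scattering norm is finite, i.e. $\|u\|_{L^6_{t,x}([0,\infty)\times\mathbb{R})}<\infty$. This follows from the assumed $L^2$-scattering: the inhomogeneous Strichartz estimate applied to the free profile gives $\|e^{it\partial_x^2}u_+\|_{L^6_{t,x}([T,\infty))}\to 0$ as $T\to\infty$, and the difference $u(t)-e^{it\partial_x^2}u_+$ can be controlled on $[T,\infty)$ via the small-data theory behind Proposition~\ref{small mass scatter theorem} once $T$ is large; on the compact slab $[0,T]$ the $L^6_{t,x}$ norm is finite by the local well-posedness already used for Theorem~\ref{global wellposedness for L2 super critical nls equation}.

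Next I would upgrade this to $\|\partial_x u\|_{L^6_{t,x}([0,\infty))}<\infty$ by a bootstrap. Partition $[0,\infty)$ into finitely many intervals $I_j$ on each of which $\|u\|_{L^6_{t,x}(I_j)}\le\eta$. Since $\partial_x$ and $\Pi$ commute with the propagator, the Strichartz estimate applied to $\partial_x u$, the pointwise bound $|\partial_x(|u|^4u)|\lesssim |u|^4|\partial_x u|$, and H\"older's inequality $\||u|^4\partial_x u\|_{L^{6/5}_{t,x}(I_j)}\le \|u\|_{L^6_{t,x}(I_j)}^4\,\|\partial_x u\|_{L^6_{t,x}(I_j)}$ together yield
\[
\|\partial_x u\|_{L^\infty_t L^2_x(I_j)\cap L^6_{t,x}(I_j)}\lesssim \|\partial_x u(t_j)\|_{L^2}+\eta^4\,\|\partial_x u\|_{L^6_{t,x}(I_j)} .
\]
Choosing $\eta$ small absorbs the last term; iterating over the finitely many $I_j$ both propagates the $H^1$ bound and sums the derivative Strichartz norms to a finite total.

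Finally, with $\|u\|_{L^6_{t,x}}$ and $\|\partial_x u\|_{L^6_{t,x}}$ finite over $[0,\infty)$, the convergence of $i\int_0^\infty e^{-i\tau\partial_x^2}\Pi\partial_x(|u|^4u)\,\mathrm{d}\tau$ in $L^2$ follows from the same dual-Strichartz/H\"older bound applied on tails $[T_1,T_2]$, whose right-hand side tends to $0$; hence $\partial_x u_+\in L^2$ and, since $u_+\in L^2_+$, we get $u_+\in H^1_+$. The identical tail estimate measured from time $t$ gives $\|\partial_x(u(t)-e^{it\partial_x^2}u_+)\|_{L^2}=\big\|\int_t^\infty e^{-i\tau\partial_x^2}\Pi\partial_x(|u|^4u)\,\mathrm{d}\tau\big\|_{L^2}\to 0$, which combined with the assumed $L^2$-convergence yields $H^1$-scattering. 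I expect the main obstacle to be the first step, converting $L^2$-scattering into a finite global $L^6_{t,x}$ norm; everything afterward is a routine bootstrap, with the only care being to keep $\Pi$ harmless, which is guaranteed by its $L^p$-boundedness and its commutation with $\partial_x$ and $e^{it\partial_x^2}$.
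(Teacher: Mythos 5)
Your proposal is correct and follows essentially the same route as the paper: both arguments first convert the $L^2$-scattering hypothesis into a finite global $L^6_{t,x}$ norm by writing $u$ as the free evolution of $u_+$ (whose $L^6_{t,x}$ tail is small by Strichartz) plus a remainder $r$ with $\|r(T)\|_{L^2}$ small for large $T$, closing a Strichartz bootstrap on the tail, and then upgrade to $\partial_x u\in L^6_{t,x}$ via the H\"older bound $\||u|^4\partial_x u\|_{L^{6/5}_{t,x}}\le\|u\|_{L^6_{t,x}}^4\|\partial_x u\|_{L^6_{t,x}}$ and absorption, using that $\Pi$ commutes with $\partial_x$ and the propagator. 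The only (immaterial) difference is that you partition $[0,\infty)$ into finitely many small-$L^6$ intervals for the derivative bootstrap, whereas the paper secures finiteness on a compact slab from the conserved-quantity $L^\infty_t H^1_x$ bound and absorbs on a single tail interval $[T_0,\infty)$.
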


\begin{proof}
We claim that $u\in L^6(0,+\infty; L^6( \mathbb{R}))$. In fact $(6,6)$ is $1$-admissible. Set $v(t,x):=e^{it\partial_x^2} u_+(x)$ and $r(t,x)=u(t,x)-v(t,x)$, then we have $v  \in L^6(\mathbb{R}_t \times \mathbb{R}_x)$ by Strichartz inequality and 
\begin{equation}\label{pde of r=u-v}
i\partial_t r +\partial_x^2 r = -\Pi(|u|^4 u).
\end{equation}
\begin{equation}\label{Duhamel formula for r}
r(t)= e^{i(t-\sigma)\partial_x^2} r(\sigma) + i\int_{\sigma}^{t} e^{i(t-\tau)\partial_x^2}\Pi(|u(\tau)|^4 u(\tau))\mathrm{d}\tau.
\end{equation}Since $u\in L^{\infty}(\mathbb{R}; H^1_+)$, we have $r\in L^6_{\mathrm{loc}}(\mathbb{R}_+, L^6(\mathbb{R}))$. Recall that $\Pi$ is bounded $L^p \to L^p$, for all $1<p<+\infty$. Applying Strichartz inequality to formula $(\ref{Duhamel formula for r})$, then there exists a constant $C^*>0$ such that
\begin{equation*}
\begin{split}
\|r\|_{L^6(T, T'; L^6(\mathbb{R}))}\leq & C^* \left( \|r(T)\|_{L^2} + \||u|^4 u\|_{L^{\frac{6}{5}}(T, T'; L^{\frac{6}{5}}(\mathbb{R}))}\right)\\
 \leq & C^*\left(\|r(T)\|_{L^2} + \sum_{k=0}^5 \|v\|_{L^6(T, T'; L^6(\mathbb{R}))}^k \|r\|_{L^6(T, T'; L^6(\mathbb{R}))}^{5-k}\right)
 \end{split}
\end{equation*}holds for all $0\leq T <T'$. Set $\delta=\min\{1, \frac{1}{2C^*(2+C^*)^4}\}$, there exists $T>0$ such that $\|v\|_{L^6(T,+\infty;L^6(\mathbb{R}))} \leq \delta$ and $\|r(T)\|_{L^2}  \leq \delta$. We choose $T_1:=\sup\{S \in [T,+\infty): \|r\|_{L^6(T,S;L^6(\mathbb{R}))} \leq \frac{1}{2+C^*}\}$. For every $T'\in (T,T_1)$, we have $\|r\|_{L^6(T, T'; L^6(\mathbb{R}))}\leq C^* \left(\delta + \delta^5 + (2+C^*)^{-5} + 4\delta \|r\|_{L^6(T,T';L^6(\mathbb{R}))}\right)$. Then we have
\begin{equation*}
\|r\|_{L^6(T, T'; L^6(\mathbb{R}))}\leq 2\delta C^*+ \frac{1}{(2+C^*)^4} \leq \frac{1}{(2+C^*)^3} < \frac{1}{2+C^*}.
\end{equation*}Thus $T_1 = +\infty$ and $\|r\|_{L^6(T, +\infty; L^6(\mathbb{R}))}< \frac{1}{2+C^*}$, which yields that $u \in L^6(0,+\infty, L^6(\mathbb{R}))$.\\

\noindent Set $\tilde{u}_+:= u_0 + i \int_0^{+\infty}e^{-i\tau \partial_x^2} \Pi(|u(\tau)|^4 u(\tau)) \mathrm{d}\tau$. Since $u \in L^6(\mathbb{R}_t \times \mathbb{R}_x)$, Strichartz inequality implies that
\begin{equation}
\|u(t)-e^{it\partial_x^2} \tilde{u}_+\|_{L^2}\lesssim \|\Pi(|u|^4 u)\|_{L^{\frac{6}{5}}(t,+\infty; L^{\frac{6}{5}}(\mathbb{R}))} \lesssim \|u\|_{L^6(t,+\infty; L^6(\mathbb{R}))}^5 \to 0, \qquad \mathrm{as} \quad t \to +\infty.
\end{equation}Thus $\tilde{u}_+=u_+$. Since the momentum $P(u)=\langle -i\partial_x u, u\rangle_{L^2}=\||D|^{\frac{1}{2}}u\|_{L^2}^2$ is conserved, we have 
\begin{equation}
\sup_{t\in \mathbb{R}} \|\partial_x u(t)\|_{L^2}^2 \lesssim E(u_0)+ \||D|^{\frac{1}{2}}u_0\|_{L^2}^{4} \|u_0\|_{L^2}^{2}.
\end{equation}Thus $\partial_x \circ \Pi(|u|^4 u) \in L^{\infty}(\mathbb{R}; L^2_+) \hookrightarrow L^1_{loc}(\mathbb{R}; L^2_+)$. The Strichartz estimate yields that $\forall T>0$, we have
\begin{equation}
\|\partial_x u\|_{L^6(0, T; L^6(\mathbb{R}))} \lesssim \|\partial_x u_0\|_{L^2} + \int_0^T \|\partial_x \Pi(|u(\tau)|^4 u(\tau))\|_{L^2} \mathrm{d}\tau<+\infty.
\end{equation}Since $u \in L^6(\mathbb{R}_t \times \mathbb{R}_x)$, there exists $T_0>0$ such that $\|u\|^4_{L^6(T_0,+\infty; L^6(\mathbb{R}))} \leq \frac{1}{2C^*}$. For all $T>T_0$, we have
\begin{equation*}
\begin{split}
\|\partial_x u\|_{L^6(T_0,T;L^6(\mathbb{R}))} \leq & C^*(\|\partial_x u_0\|_{L^2}+ \||\partial_x u| |u|^4\|_{L^{\frac{6}{5}}(T_0,T; L^{\frac{6}{5}}(\mathbb{R}))})\\
\leq & C^*(\|\partial_x u_0\|_{L^2}+ \| u\|^4_{L^{6}(T_0,+\infty; L^{6}(\mathbb{R}))}\|\partial_x u\|_{L^{6}(T_0,T; L^{6}(\mathbb{R}))})\\
\end{split}
\end{equation*}Thus $\|\partial_x u\|_{L^6(T_0,+\infty;L^6(\mathbb{R}))} \leq 2C^* \|\partial_x u_0\|_{L^2}$ and $\partial_x u \in L^6(0,+\infty;L^6(\mathbb{R}))$. Consequently, we use Strichartz estimate to obain
\begin{equation*}
\|\partial_x( u(t)-e^{it\partial_x^2} u_+)\|_{L^2}  \lesssim \|u\|_{L^6(t,+\infty; L^6(\mathbb{R}))}^4  \|\partial_x u\|_{L^6(t,+\infty; L^6(\mathbb{R}))}\to 0,
\end{equation*}as $t \to +\infty$.
\end{proof}

\bigskip

\noindent Similarly, we have the following proposition for scattering backward in time.

\begin{prop}\label{persistence of regularity of scattering negative time}
If $u_0 \in H^1_+$ and there exists $u_-\in L^2_+$ such that $\lim_{t\to-\infty}\|u(t)-e^{it\partial_x^2} u_-\|_{L^2}=0$, where $u$ is the unique solution of $(\ref{NLS-Szego quintic R})$, then we have $u_-=u_0 - i \int_{-\infty}^0 e^{-i\tau \partial_x^2} \Pi(|u(\tau)|^4 u(\tau)) \mathrm{d}\tau \in H^1_+$ and $\lim_{t\to+\infty}\|u(t)-e^{it\partial_x^2} u_-\|_{H^1}=0$.
\end{prop}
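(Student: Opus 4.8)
The plan is to avoid repeating the entire Strichartz bootstrap of Proposition \ref{persistence of regularity of scattering} and instead deduce the backward statement from the forward one through the time-reversal symmetry of equation $(\ref{NLS-Szego quintic R})$. The relevant symmetry is the anti-linear map $\mathcal{S}$ defined by $(\mathcal{S}g)(x)=\overline{g(-x)}$, and on space-time the substitution $w(t,x):=(\mathcal{S}u(-t))(x)=\overline{u(-t,-x)}$. First I would record three facts about $\mathcal{S}$: since $\widehat{\mathcal{S}g}(\xi)=\overline{\hat g(\xi)}$, one has $|\widehat{\mathcal{S}g}|=|\hat g|$ pointwise, so $\mathcal{S}$ is an anti-linear isometry of every $H^s(\mathbb{R})$; because the Fourier support is preserved, $\mathcal{S}$ maps $L^2_+$ and $H^1_+$ into themselves and commutes with the Szeg\H{o} projector, $\mathcal{S}\Pi=\Pi\mathcal{S}$; and it intertwines the free flow as $\mathcal{S}\,e^{-i\tau\partial_x^2}=e^{i\tau\partial_x^2}\mathcal{S}$, by inspection on the Fourier side where $e^{\mp i\tau\partial_x^2}$ acts as multiplication by $e^{\pm i\tau\xi^2}$.

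Granting these, the second step is to verify that $w$ solves the very same equation $(\ref{NLS-Szego quintic R})$ with data $w(0)=\mathcal{S}u_0\in H^1_+$. The point is that $\mathcal{S}(|g|^4g)=|\mathcal{S}g|^4\,\mathcal{S}g$, that the sign change from $t\mapsto -t$ is compensated by conjugation in the term $i\partial_t$ while $x\mapsto -x$ leaves $\partial_x^2$ untouched, and that $\mathcal{S}\Pi=\Pi\mathcal{S}$; combining these gives $i\partial_t w+\partial_x^2 w=-\Pi(|w|^4 w)$. Moreover the backward hypothesis on $u$ becomes a forward hypothesis on $w$: writing $s=-t$ and applying $\mathcal{S}$ to $\|u(s)-e^{is\partial_x^2}u_-\|_{L^2}\to 0$ as $s\to-\infty$, the isometry property together with the intertwining relation yields $\|w(t)-e^{it\partial_x^2}(\mathcal{S}u_-)\|_{L^2}\to 0$ as $t\to+\infty$.

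With $w$ a forward-scattering $H^1_+$ solution, I would simply invoke Proposition \ref{persistence of regularity of scattering}: it gives $\mathcal{S}u_-=\mathcal{S}u_0+i\int_0^{+\infty}e^{-i\tau\partial_x^2}\Pi(|w(\tau)|^4 w(\tau))\,\mathrm{d}\tau\in H^1_+$ and $\|w(t)-e^{it\partial_x^2}(\mathcal{S}u_-)\|_{H^1}\to 0$. Applying $\mathcal{S}$ once more (it is an involution and an $H^1$-isometry) gives $u_-\in H^1_+$ and $\|u(s)-e^{is\partial_x^2}u_-\|_{H^1}\to 0$ as $s\to-\infty$. To recover the exact integral formula one uses the anti-linearity of $\mathcal{S}$, which turns the prefactor $i$ into $-i$, together with the identities $\mathcal{S}\Pi(|w(\tau)|^4 w(\tau))=\Pi(|u(-\tau)|^4 u(-\tau))$ and $\mathcal{S}e^{-i\tau\partial_x^2}=e^{i\tau\partial_x^2}\mathcal{S}$, followed by the change of variable $\sigma=-\tau$, which converts $\int_0^{+\infty}$ into $\int_{-\infty}^0$ and produces exactly $u_-=u_0-i\int_{-\infty}^0 e^{-i\sigma\partial_x^2}\Pi(|u(\sigma)|^4 u(\sigma))\,\mathrm{d}\sigma$.

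The only genuinely delicate point, and the one I would check most carefully, is that $\mathcal{S}$ really preserves the Hardy space and commutes with $\Pi$: plain complex conjugation alone sends $L^2_+$ onto its orthogonal complement, since it reverses the Fourier support, and it is precisely the additional spatial reflection $x\mapsto -x$ that restores the positive-frequency condition. Everything else reduces to bookkeeping with the anti-linearity of $\mathcal{S}$ and the substitution $\tau\mapsto -\tau$. As an alternative I could instead repeat the bootstrap of Proposition \ref{persistence of regularity of scattering} verbatim on the interval $(-\infty,0]$, but the symmetry reduction is shorter and makes the appearance of the sign in the Duhamel formula transparent.
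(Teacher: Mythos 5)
Your proof is correct, and it takes a genuinely different route from the paper. The paper disposes of the backward-in-time case with a one-line ``similarly'', i.e.\ by implicitly rerunning the Strichartz bootstrap of Proposition \ref{persistence of regularity of scattering} on $(-\infty,0]$ with the Duhamel formula anchored at $t\to-\infty$. You instead reduce the backward statement to the forward one through the anti-linear symmetry $\mathcal{S}g(x)=\overline{g(-x)}$, $w(t,x)=\overline{u(-t,-x)}$, and all the required ingredients check out: $\widehat{\mathcal{S}g}=\overline{\hat g}$ preserves the positive-frequency support (so $\mathcal{S}$ is an isometric involution of $H^s_+$ commuting with $\Pi$), the intertwining $\mathcal{S}e^{-i\tau\partial_x^2}=e^{i\tau\partial_x^2}\mathcal{S}$ holds on the Fourier side, $w$ solves $(\ref{NLS-Szego quintic R})$ with datum $\mathcal{S}u_0$ (and is \emph{the} solution, by the uniqueness in Theorem \ref{global wellposedness for L2 super critical nls equation}, so Proposition \ref{persistence of regularity of scattering} genuinely applies to it), and the anti-linearity of $\mathcal{S}$ together with the substitution $\sigma=-\tau$ produces exactly the sign and the integral $\int_{-\infty}^0$ in the claimed formula for $u_-$. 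What your approach buys is economy and transparency: no estimate is repeated, and the sign flip in the Duhamel term is forced by anti-linearity rather than re-derived. It also isolates a structural point that the paper never makes explicit and that is easy to get wrong: the usual NLS time-reversal $u(t,x)\mapsto\overline{u(-t,x)}$ is \emph{not} a symmetry here, because conjugation alone sends $L^2_+$ to negative frequencies; it is precisely the extra reflection $x\mapsto-x$ that restores the Hardy-space constraint, which is why this reduction is available at all. What the paper's (implicit) approach buys is independence from the forward proposition and from any symmetry, which would matter for variants of the equation breaking the reflection symmetry; here both are equally rigorous. Note finally that the limit you prove, $\|u(t)-e^{it\partial_x^2}u_-\|_{H^1}\to 0$ as $t\to-\infty$, is the correct one; the ``$t\to+\infty$'' in the stated proposition is evidently a typo, and your argument confirms the intended statement.
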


\bigskip
 
\noindent The $L^r-$norm of a solution of linear Sch\"odinger equation decays as $t \to \pm\infty$, for all $2< r \leq +\infty$.

\begin{lem}\label{Lr estimates to 0 when time go to infinity}
If $f \in H^1(\mathbb{R})$ and $2<r \leq +\infty$, then $\|e^{it\partial_x^2} f\|_{L^r} \to 0$, as $|t| \to +\infty$.
\end{lem}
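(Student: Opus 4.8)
The plan is to reduce the statement to the classical pointwise dispersive estimate for the free Schr\"odinger group combined with a density argument. The obstruction to applying dispersion directly is that $H^1(\mathbb{R})$ does not embed into $L^{r'}(\mathbb{R})$ for the dual exponent $r'<2$, so one cannot feed $f$ itself into the $L^{r'}\to L^r$ smoothing estimate; the whole point of the argument is to circumvent this.

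First I would record the uniform-in-time bound. Since $e^{it\partial_x^2}$ commutes with $\partial_x$ and is an isometry on $L^2(\mathbb{R})$, one has $\|e^{it\partial_x^2} f\|_{H^1}=\|f\|_{H^1}$ for every $t$, and the one-dimensional Sobolev embedding $H^1(\mathbb{R})\hookrightarrow L^r(\mathbb{R})$ (valid for all $2\le r\le+\infty$) gives
\[
\sup_{t\in\mathbb{R}}\|e^{it\partial_x^2} f\|_{L^r}\lesssim_r \|f\|_{H^1}.
\]
Next I would invoke dispersion on a dense subclass. The explicit kernel of $e^{it\partial_x^2}$ yields $\|e^{it\partial_x^2} g\|_{L^\infty}\lesssim |t|^{-\frac12}\|g\|_{L^1}$ for $t\ne 0$, and Riesz--Thorin interpolation with the $L^2$ isometry produces
\[
\|e^{it\partial_x^2} g\|_{L^r}\lesssim |t|^{-\frac12\left(1-\frac2r\right)}\|g\|_{L^{r'}},\qquad 2\le r\le+\infty,\quad \tfrac1r+\tfrac1{r'}=1.
\]
For $g\in L^1(\mathbb{R})\cap H^1(\mathbb{R})$ one has $g\in L^{r'}(\mathbb{R})$ by interpolation between $L^1$ and $L^2$ (since $1\le r'\le 2$), so the right-hand side tends to $0$ as $|t|\to+\infty$, the exponent $\frac12(1-\frac2r)$ being strictly positive for $r>2$.

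Finally, I would close by density. Given $f\in H^1(\mathbb{R})$ and $\varepsilon>0$, I pick $g\in L^1(\mathbb{R})\cap H^1(\mathbb{R})$ (say a Schwartz function) with $\|f-g\|_{H^1}<\varepsilon$ and split
\[
\|e^{it\partial_x^2} f\|_{L^r}\le \|e^{it\partial_x^2}(f-g)\|_{L^r}+\|e^{it\partial_x^2} g\|_{L^r}\lesssim_r \|f-g\|_{H^1}+\|e^{it\partial_x^2} g\|_{L^r}.
\]
The first term is $\lesssim_r \varepsilon$ uniformly in $t$ by the bound of the first step, while the second tends to $0$ by the second step, so $\limsup_{|t|\to\infty}\|e^{it\partial_x^2} f\|_{L^r}\lesssim_r \varepsilon$; letting $\varepsilon\to 0$ gives the claim. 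The only genuinely delicate point — the ``hard part'' — is exactly that $f$ need not belong to $L^{r'}$, which is why dispersion is applied only to the approximant $g$ and the approximation error is absorbed through the $t$-independent $H^1\to L^r$ control established in the first step.
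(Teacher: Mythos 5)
Your proof is correct: the propagator is an $H^1$-isometry, so the Sobolev embedding $H^1(\mathbb{R})\hookrightarrow L^r(\mathbb{R})$ gives the time-uniform bound; the kernel bound plus Riesz--Thorin gives the $L^{r'}\to L^r$ decay on the Schwartz class; and the $\varepsilon$-splitting closes the argument. However, it is a genuinely different route from the paper's. The paper never uses the pointwise dispersive estimate or a density argument; instead it sets $w(t)=e^{it\partial_x^2}f$ and $q=\frac{4r}{r-2}$, so that $(q,r)$ is admissible, and invokes the Strichartz estimate to place $w$ in $L^q(\mathbb{R};L^r(\mathbb{R}))$ with $q<\infty$. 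It then shows $t\mapsto w(t)$ is uniformly (H\"older) continuous into $L^r$: Gagliardo--Nirenberg gives $\|w(t)-w(s)\|_{L^r}\lesssim \|f\|_{H^1}^{2/q}\|w(t)-w(s)\|_{L^2}^{1-2/q}$, and the $L^2$ difference is controlled by $\|f\|_{H^1}|t-s|^{1/2}$ by interpolating the trivial $H^1$ bound against the Lipschitz-in-$H^{-1}$ bound coming from $\partial_t w=i\partial_x^2 w$. A uniformly continuous function of $t$ whose $L^r$-norm is $q$-integrable in time with $q<\infty$ must vanish as $|t|\to+\infty$. Comparing the two: your argument is more elementary and self-contained (explicit kernel, interpolation, density) and yields a quantitative rate $|t|^{-(\frac12-\frac1r)}$ for data in $L^{r'}\cap H^1$, whereas the general $H^1$ conclusion remains qualitative in both proofs; the paper's argument avoids the explicit kernel entirely and is the sort of reasoning that transfers to settings where space-time (Strichartz-type) bounds are available but pointwise dispersion is not. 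Both handle the endpoint $r=+\infty$ correctly: in your case $r'=1$ is the dispersive estimate itself, and in the paper's case $q=4$ and the pair $(4,\infty)$ is admissible in one dimension.
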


\begin{proof}
We set $w(t):=e^{it\partial_x^2} f$ and $q=\frac{4r}{r-2}$, then $\frac{2}{q}+\frac{1}{r}=\frac{1}{2}$ and $w\in L^{\infty}(\mathbb{R},H^1_+) \bigcap L^q(\mathbb{R};L^r(\mathbb{R}))$ by Strichartz estimate. Gagliardo--Nirenberg inequality yields that
\begin{equation*}
\|w(t)-w(s)\|_{L^r} \leq \|\partial_x w(t) - \partial_x w(s)\|_{L^2}^{\frac{2}{q}} \|w(t)-w(s)\|_{L^2}^{1-\frac{2}{q}}\leq 2 \|\partial_x f\|_{L^2}^{\frac{2}{q}} \|w(t)-w(s)\|_{L^2}^{1-\frac{2}{q}}.
\end{equation*}Since $\sup_{t \in \mathbb{R}}\|\partial_t w(t) \|_{H^{-1}(\mathbb{R})} = \sup_{t \in \mathbb{R}}\|\partial_x^2 w(t) \|_{H^{-1}(\mathbb{R})}\leq \|f\|_{H^1}$, the mapping $t \to w(t)$ is Lipschitz continuous $\mathbb{R} \to H^{-1}(\mathbb{R})$. Therefore, 
\begin{equation*}
\|w(t)-w(s)\|_{L^2} \leq \sqrt{\|w(t)-w(s)\|_{H^1}\|w(t)-w(s)\|_{H^{-1}}} \lesssim \|f\|_{H^1} |t-s|^{\frac{1}{2}}
\end{equation*}and $\|w(t)-w(s)\|_{L^r}\lesssim \|f\|_{H^1} |t-s|^{\frac{q-2}{2q}}$. The mapping $t \to w(t)$ is uniformly continuous $\mathbb{R} \to L^r(\mathbb{R})$. Since $w \in L^q(\mathbb{R};L^r(\mathbb{R}))$, we have $\|e^{it\partial_x^2} f\|_{L^r} = \|w(t)\|_{L^r} \to 0$, as $|t| \to +\infty$. 
\end{proof}

\bigskip

\noindent This lemma yields that a traveling wave $u(t,x)=e^{i\omega t}Q(x)$ does not $H^1-$scatter neither $L^2-$scatter, with $\omega>0$ and $Q^{(0)} \in G_2^{(0)}(\sqrt[4]{3J_2^{(0)}}, (\frac{3\sqrt{3J_2^{(0)}}\omega}{2})^{\frac{1}{6}})$. Together with theorem $\ref{Orbital stability of ground state of I m=2 gamma positif}$, we can prove corollary $\ref{scattering threshold reduced}$.

\bigskip

\subsection{Open problem of uniqueness of ground states}\label{subsection of Problem of uniqueness of ground states for general gamma}

\noindent The problem of classification of ground states of $I_2^{(\gamma)}$ remains open for general $\gamma$, since it is difficult to solve the non local equation $(\ref{equation for bounded state Q})$. However, for every $m\in \mathbb{N}$, if $f$ is a ground state of $I_m^{(\gamma)}$, then so is  $P(f)$, where $P(f)(x)= \frac{1}{2\pi}\int_{0}^{+\infty}|\hat{f}(\xi)|e^{ix \xi}\mathrm{d}\xi$. Precisely, we have the following proposition.
\begin{prop}\label{fourier transformation positive}
For every $m \in \mathbb{N}$ and $\gamma\geq 0$, if $f \in G_m^{(\gamma)}$, then $P(f)\in G_m^{(\gamma)}$ and there exist $a, b \in \mathbb{R}$ such that $\hat{f}(\xi)=|\hat{f}(\xi)|e^{i(ax+b)}$, for every $\xi \geq 0$.
\end{prop}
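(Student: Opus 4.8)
The plan is to show that the phase-removal operator $P$ leaves the numerator of $I_m^{(\gamma)}$ unchanged while never decreasing the denominator, so that $P(f)$ is again a minimizer, and then to read off the phase of $\hat f$ from the equality case of that denominator estimate. Everything is done on the Fourier side, using that $\mathrm{supp}\,\hat f \subset [0,+\infty)$ for $f\in L^2_+$.

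First I would record the numerator invariance. Writing $\rho:=|\hat f|$, Plancherel and the support condition give
\begin{equation*}
\|f\|_{L^2}^2 = \tfrac{1}{2\pi}\int_0^{+\infty}\rho(\xi)^2\,\mathrm{d}\xi, \quad \||D|^{\frac12}f\|_{L^2}^2 = \tfrac{1}{2\pi}\int_0^{+\infty}\xi\,\rho(\xi)^2\,\mathrm{d}\xi, \quad \|\partial_x f\|_{L^2}^2 = \tfrac{1}{2\pi}\int_0^{+\infty}\xi^2\,\rho(\xi)^2\,\mathrm{d}\xi.
\end{equation*}
Since $\widehat{P(f)}=\rho$ on $[0,+\infty)$, we have $P(f)\in H^1_+\backslash\{0\}$ with exactly the same three norms, so the numerator of $I_m^{(\gamma)}$ agrees for $f$ and $P(f)$. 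For the denominator I would use $|f|^{2m+2}=|f^{m+1}|^2$, whence by Plancherel $\|f\|_{L^{2m+2}}^{2m+2}=\|f^{m+1}\|_{L^2}^2=c_m\int_0^{+\infty}|(\hat{f}^{*(m+1)})(\xi)|^2\,\mathrm{d}\xi$, where $\hat{f}^{*(m+1)}$ denotes the $(m+1)$-fold convolution, supported on $[0,+\infty)$ because $f^{m+1}\in L^2_+$. The triangle inequality for this convolution gives pointwise
\begin{equation*}
|(\hat{f}^{*(m+1)})(\xi)| \leq (\rho^{*(m+1)})(\xi) = (\widehat{P(f)}^{\,*(m+1)})(\xi),
\end{equation*}
and hence $\|f\|_{L^{2m+2}}\leq\|P(f)\|_{L^{2m+2}}$. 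Combining the two observations, $I_m^{(\gamma)}(P(f))\leq I_m^{(\gamma)}(f)=J_m^{(\gamma)}$, so $P(f)\in G_m^{(\gamma)}$ and the denominator inequality is in fact an equality.

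The last and hardest step is the rigidity extracted from that equality. Equality in $|\int g|=\int|g|$ forces, for a.e.\ $\xi>0$, the integrand $\prod_{i=1}^{m+1}\hat f(\eta_i)$ to have a $\xi$-constant argument on the simplex $\{\eta_i\geq 0,\ \sum_i\eta_i=\xi\}$; writing $\hat f=\rho\,e^{i\phi}$, this reads $\sum_{i=1}^{m+1}\phi(\eta_i)\equiv c(\xi)\ (\mathrm{mod}\ 2\pi)$ wherever $\prod_i\rho(\eta_i)>0$. Freezing all but two of the variables reduces this to a Cauchy-type relation $\phi(\eta_1)+\phi(\eta_2)=g(\eta_1+\eta_2)$, equivalently $e^{i\phi(\eta_1)}e^{i\phi(\eta_2)}=E(\eta_1+\eta_2)$ for the unimodular function $e^{i\phi}$. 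A measurable solution of this multiplicative Cauchy equation is affine, $e^{i\phi(\xi)}=e^{i(a\xi+b)}$, which is exactly $\hat f(\xi)=|\hat f(\xi)|e^{i(a\xi+b)}$ and says that $f$ is a translate-and-phase of $P(f)$.

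I expect the genuine difficulty to lie in this final step: justifying the freezing reduction and the measurable-homomorphism conclusion requires controlling the essential support of $\rho$, in particular ensuring it is large enough (e.g.\ accumulating at $0$, so that the passage $\eta_2\to 0$ and the Cauchy argument are legitimate) and handling the a.e.\ and $\mathrm{mod}\ 2\pi$ nature of $\phi$. The holomorphy of $f\in L^2_+$ together with the regularity of ground states should supply the needed support information and close the gap.
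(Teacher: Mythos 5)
Your first two steps --- the numerator invariance via Plancherel and the pointwise convolution inequality $|\hat f^{*(m+1)}|\leq \rho^{*(m+1)}$ for the denominator, giving $P(f)\in G_m^{(\gamma)}$ and equality in the $L^{2m+2}$ bound --- coincide exactly with the paper's argument. The genuine gap is the one you flag yourself at the end, and it cannot be closed the way you suggest. The rigidity step needs $\hat f$ to be continuous and \emph{non-vanishing} on all of $[0,+\infty)$: otherwise the phase $\phi$ is only constrained on the set where $\rho>0$, and if that set is disconnected or sparse, the relation $\sum_j\phi(\eta_j)=c\bigl(\sum_j\eta_j\bigr)$ on simplices does not force an affine phase, nor is the freezing reduction or the lifting of $e^{i\phi}$ even well defined globally. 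Your hope that ``the holomorphy of $f\in L^2_+$'' supplies this is misplaced: holomorphy in the upper half-plane is exactly the support condition $\mathrm{supp}\,\hat f\subset[0,+\infty)$ and says nothing about interior zeros. For instance, the inverse Fourier transform of $\xi(\xi-1)e^{-\xi}\mathds{1}_{\xi\geq 0}$ is a perfectly good element of $H^1_+$ whose Fourier transform vanishes at $\xi=1$; what rules such zeros out for a \emph{ground state} is not holomorphy but the minimization property.

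The missing idea, which is the heart of the paper's proof, is to use the Euler--Lagrange equation $(\ref{Euler Lagrange equation of I m gamma})$ twice. Written in Fourier variables it becomes $(\ref{Euler--Lagrange equation of I m gamma fourier mode})$, i.e. $\bigl(A_m(f)+B_{m,\gamma}(f)\xi+C_m(f)\xi^2\bigr)h(\xi)=D_{m,\gamma}\mathds{1}_{\xi\geq 0}T(h,\dots,h)(\xi)$ with $h=\hat f$ and $T$ a positive iterated-convolution operator; this representation gives the continuity of $h$. Since $P(f)$ is itself a minimizer (your step one), $|h|$ satisfies the same equation, and positivity of the integrand in $T(|h|,\dots,|h|)$ yields the implication: if $h(\xi)=0$ and $h(\zeta)\neq 0$, then $h\bigl(\frac{m\zeta+\xi}{m+1}\bigr)=0$. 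Iterating $\xi_{n+1}=\frac{m\zeta+\xi_n}{m+1}\to\zeta$ and invoking continuity forces $h(\zeta)=0$, a contradiction; hence $h$ never vanishes unless $f\equiv 0$. Only after this can one lift $h/|h|$ to a continuous real phase $\varphi$ on $\mathbb{R}_+$, deduce Jensen's equation $\phi(\xi_1)+\phi(\xi_2)=2\phi\bigl(\frac{\xi_1+\xi_2}{2}\bigr)$ for $\phi=\varphi-\varphi(0)$, and conclude that $\varphi$ is affine. So your outline is correct up to and including the identification of the equality case, but the ``support information'' you defer to holomorphy and unspecified regularity is precisely the nontrivial content of the proposition, and it requires the Euler--Lagrange equation of the minimizer, not properties of the Hardy space.
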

\begin{proof}Parseval identity implies that
\begin{equation*}
\||D|^s P(f)\|_{L^2}^2 =\frac{1}{2\pi}\int_{0}^{+\infty}|\xi|^{2s}|\hat{f}(\xi)|^2  \mathrm{d}\xi= \||D|^s f\|_{L^2}^2, \qquad \forall 0\leq s \leq 1.
\end{equation*}However, since $\widehat{P(f)}(\xi)=|f(\xi)|$, for every $\xi >0$,  we have $\widehat{P(f)^{m+1}} \geq |\widehat{f^{m+1}}|$, for every $m\in \mathbb{N}$ and
\begin{equation*}
\|P(f)\|_{L^{2m+2}}^{2m+2} = \frac{1}{2 \pi }\int_{0}^{+\infty}| \widehat{P(f)^{m+1}}(\xi)|^2\mathrm{d}\xi  \geq \frac{1}{2 \pi }\int_{0}^{+\infty}| \widehat{f^{m+1}}(\xi)|^2\mathrm{d}\xi = \|f\|_{L^{2m+2}}^{2m+2}
\end{equation*}Thus $J_m^{(\gamma)} \leq I_m^{(\gamma)}(P(f)) \leq  I_m^{(\gamma)}(f) = J_m^{(\gamma)}$, for all $\gamma \geq 0$ and $m\in \mathbb{N}$. So $P(f) \in G_m^{(\gamma)}$ and all precedent inequalities become equalities. In particular,
$\widehat{P(f)^{m+1}} = |\widehat{f^{m+1}}|$. We set $h(\xi)=\hat{f}(\xi)$,
then the Euler--Lagrange equation $(\ref{Euler Lagrange equation of I m gamma})$ reads in Fourier modes as
\begin{equation}\label{Euler--Lagrange equation of I m gamma fourier mode}
(A_m(f)+B_{m,\gamma}(f)\xi + C_m(f)\xi^2)h(\xi)=D_{m,\gamma} \mathds{1}_{\xi\geq 0}T(h,h,\cdots, h)(\xi),
\end{equation}where $A_m(f), B_{m,\gamma}(f),  C_m(f), D_{m,\gamma}>0$ and $T : L^1(\mathbb{R}_+)^{2m+1} \to L^1(\mathbb{R}_+) $ is $(2m+1)$-linear defined as
\begin{equation*}
T(h_1,h_2,\cdots,h_{2m+1})(\xi)=\int_{\mathcal{S(\xi)}}\Pi_{j=1}^{2m+1}h_j(\eta_j) \mathrm{d}\eta_1\cdots\mathrm{d}\eta_{2m+1} 
\end{equation*}with $\mathcal{S(\xi)}=\{(\eta_1, \cdots, \eta_{2m+1}) \in \mathbb{R}_+^{2m+1}: \sum_{j=1}^{m+1}\eta_j = \sum_{j=m+2}^{2m+1}\eta_k +\xi,\quad \mathrm{and}\quad  \eta_j, \zeta_k \geq 0 \}$. \\

\noindent We claim that if $h(\xi)=0$ for some $\xi \in \mathbb{R}_+$ then $h\equiv 0$. In fact, assume by contradiction that $h(\zeta)\ne 0$, for some $\zeta \geq 0$. Since $P(f) \in G_m^{(\gamma)}$, we replace $h$ by $|h|$ in equation $(\ref{Euler--Lagrange equation of I m gamma fourier mode})$ in order to get the following implication:
\begin{equation*}
h(\xi) =0 \Longrightarrow  h(\frac{m \zeta +\xi}{m+1})=0.
\end{equation*}We construct an iterative sequence $\xi_0=\xi$ and $\xi_{n+1}=\frac{m \zeta +\xi_n}{m+1}$, $\forall n \in \mathbb{N}$. Then we have $h(\xi_n)=0$ and $\lim_{n\to +\infty}\xi_n = \zeta$. The continuity of $h$ gives that $h(\zeta)=0$, contradiction. Thus $h\equiv 0$.\\

\noindent Since $h=\hat{f}$ and $f \ne 0$, then $h$ is continuous $\mathbb{R} \to \mathbb{C}^*$. Thus there exists a continuous function $\alpha : \mathbb{R}_+ \to \mathbb{S}^1$ such that
\begin{equation*}
\Pi_{j=1}^{m+1} h(\xi_j) =\alpha(\sum_{j=1}^{m+1} \xi_j) \Pi_{j=1}^{m+1} |h(\xi_j)|.
\end{equation*}The lifting theorem yields that there exists a unique continuous function $\varphi : \mathbb{R}_+ \to \mathbb{R}$ such that 
\begin{equation*}
h(\xi)= |h(\xi)| e^{i\varphi(\xi)}, \qquad \sum_{j=1}^{m+1}\varphi(\xi_j)=\beta(\sum_{j=1}^{m+1} \xi_j)
\end{equation*}for some continuous function $\beta : \mathbb{R}_+ \to \mathbb{R}$. We set $\phi(\xi)=\varphi(\xi)-\varphi(0)$ then we have 
\begin{equation*}
\phi(\xi_1) + \phi(\xi_2) = 2 \phi(\frac{\xi_1 + \xi_2}{2}), \qquad \forall \xi_1, \xi_2 \geq 0.
\end{equation*}Consequently, $\varphi(\xi)=\phi(1)\xi+\varphi(0)=(\varphi(1)-\varphi(0))\xi+\varphi(0)$, for every $\xi\geq 0$.
\end{proof}

\noindent Thus it suffices to study the uniqueness of ground states modulo the positive Fourier transformation.\\

\noindent We compare theorem $\ref{Orbital stability of ground state of I m=2 gamma positif}$ and theorem $\ref{Orbital stability of ground state of I m=2 gamma=2}$. Since we do not know the uniqueness of ground states of the functional $I_2^{(\gamma)}$, the conservation law $P(u)=\||D|^{\frac{1}{2}}u\|_{L^2}^2$ can not be completely used to determine the $L^6-$norm of the final profile that approaches $u(t)$. However, if the ground state is unique up to scaling, phase rotation and spatial translation, then we can determine the $L^6-$norm of the profile that approaches $u(t)$. So we have the actual orbital stability in the case $\gamma=2$.

\bigskip
\bigskip

\bigskip
\bigskip

\end{document}